\documentclass[twocolumn,nameyear,amsthm]{autart}

\usepackage[T1]{fontenc}
\usepackage{lmodern}
\usepackage{amsmath,amssymb,amsfonts,mathrsfs,mathtools}
\usepackage{booktabs}
\usepackage[longnamesfirst]{natbib}

\allowdisplaybreaks

\newcommand{\R}{\mathbb R}

\newcommand{\Kfb}{\mathsf K}

\newcommand{\Fbar}{F}
\newcommand{\B}{\mathcal B}
\newcommand{\Bzero}{\mathsf B_0}
\newcommand{\BT}{\mathsf B_T}
\newcommand{\bd}{\mathrm{bd}}
\newcommand{\lin}{\mathrm{lin}}
\newcommand{\Jc}{\mathbb J}
\newcommand{\xinf}{x_\infty}
\newcommand{\uinf}{u_\infty}
\newcommand{\pinf}{p_\infty}
\newcommand{\norm}[1]{\lVert #1\rVert}
\newcommand{\ip}[2]{\langle #1,#2\rangle}

\numberwithin{equation}{section}
\newtheorem{theorem}{Theorem}[section]
\newtheorem{proposition}[theorem]{Proposition}
\newtheorem{lemma}[theorem]{Lemma}
\newtheorem{corollary}[theorem]{Corollary}
\newtheorem{definition}[theorem]{Definition}

\begin{document}
\begin{frontmatter}
\date{17 August 2026}
\title{Horizon-Uniform Sensitivity and Decay of Terminal Reward Perturbations in Discrete-Time Pontryagin Systems\thanksref{footnoteinfo}}
\thanks[footnoteinfo]{Corresponding author: Pyuyi Chufeng Huang.}

\author[scu1]{Pyuyi Chufeng Huang}\ead{pyuyi233@gmail.com}
\author[scu1]{Zikang Song}\ead{1289094751@qq.com}
%\author[scu1]{Xingshu Chen}\ead{chenxsh@scu.edu.cn}

%\address[scu1]{School of Cyber Science and Engineering, Sichuan University, Chengdu, Sichuan, China}
\address[scu1]{School of Mathematics, Sichuan University, Chengdu, Sichuan, China}

\begin{keyword}
Optimal control; Boundary-value problems; Exponential dichotomies; Dynamic programming; Sensitivity analysis; Symplectic methods
\end{keyword}

\begin{abstract}
We study local stationary solutions of finite-horizon discrete-time
Pontryagin systems near a steady extremal.  Suppose that the stationarity
equation for the control is regular, the reduced state--costate map is
hyperbolic, and the endpoint conditions satisfy a scaled transversality
condition with respect to the stable and unstable subspaces.  Then the
linearized boundary-value problem admits an inverse whose Green estimate is
uniform in the horizon.  The Green kernel separates interior decay from the
two reflections induced by the endpoint conditions.  For $x_0=x_{\rm in}$ and
$p_T=r_x(x_T,y)$, a contraction argument in a weighted norm proves existence
and uniqueness in a neighborhood independent of $T$, together with uniform
Lipschitz estimates and a pointwise quadratic remainder.  We also derive an
explicit admissible data radius and an a posteriori criterion for existence
and local uniqueness near an approximate trajectory.  For these graph
boundary conditions, a one-sided Green estimate shows that a perturbation of
the terminal reward changes the initial control and the gradient with respect to the initial state
of the stationary objective by $O(e^{-\alpha_{\rm ter} T})$ for every
$\alpha_{\rm ter}$ below the dichotomy rate.  For linear-quadratic systems with
invertible $A$, stabilizable $(A,B)$, $Q\succ0$, $R\succ0$, and a nonpositive terminal
Hessian, a symplectic graph condition verifies the assumptions, and the
finite-horizon Riccati matrix and initial feedback gain converge at rate
$O(e^{-2\gamma T})$.  Numerical experiments verify the certificates and the
predicted decay rates.
\end{abstract}
\end{frontmatter}
\section{Introduction}\label{sec:intro}
Finite-horizon optimal control methods repeatedly solve two-point Pontryagin
equations, for example in shooting, continuation, and receding-horizon
computations \citep{bertsekas2017dynamic,grune2017nmpc}.
A fixed-horizon implicit function theorem gives a local stationary branch, but
does not control the inverse uniformly as the horizon grows.

Dichotomy theory relates boundary
transversality to well-conditioned linear boundary-value problems
\citep{coppel1978dichotomies}.
Implicit-function arguments in weighted sequence spaces yield horizon-uniform nonlinear
sensitivity once a uniformly bounded linear inverse is available
\citep{grune2021abstract}.  Hyperbolic Hamiltonian equilibria and transverse
stable and unstable manifolds also yield two-sided turnpike estimates.
For optimal solutions and KKT systems, uniform
exponential sensitivity has been proved under second-order sufficient
conditions, constraint qualifications, or controllability assumptions.
Dynamic programming gives the complementary value-function description of
the same optimal trajectories; its differentiable characteristics are the
Hamiltonian state--costate equations
\citep{bertsekas2017dynamic,sassano2025infinite}.
The correspondence is classical; here it determines when the stationary
quantities certified below coincide with value gradients and feedback laws.

First, \citet{na2020exponential} prove decay of directional sensitivities for
locally optimal discrete-time programs under uniform second-order sufficiency,
controllability, and derivative bounds, while
\citet{shin2021controllability} relate second-order sufficiency and constraint
regularity to controllability and observability; we instead use a
verifiable transversality condition for a hyperbolic state--costate map to
construct a local stationary Pontryagin branch without assuming optimality.
Second, \citet{shin2022exponential} prove that primal--dual sensitivity in
graph-structured nonlinear programs decays with graph distance under a strong second-order sufficient
condition and the linear independence constraint qualification; for the
temporal chain, we separate the contributions of both boundaries and use a
scaled boundary matrix to bound the influence of the terminal condition on the
initial control and initial costate.  Third, \citet{breiten2020turnpike} and
\citet{kunisch2020terminal} prove exponential receding-horizon estimates for
infinite-dimensional linear-quadratic (LQ) problems and stabilization problems
with terminal penalties, respectively; we instead bound terminal reward perturbations in a
discrete-time nonlinear Pontryagin boundary problem, with optimality required
only for its interpretation in optimal control.  Fourth,
\citet{sakamoto2021turnpike} derive nonlinear turnpike estimates from stable and
unstable manifolds, while \citet{xu2018exponentially} prove exponential
accuracy of an overlapping temporal decomposition in the overlap size; our
framework combines the scaled boundary matrix with a horizon-independent
nonlinear existence radius and pointwise quadratic remainder, adds a separate
a posteriori existence test, and uses the LQ M\"obius representation to obtain
the $O(e^{-2\gamma T})$ bound on sensitivity to the terminal Hessian.

We seek a finite-horizon matrix criterion that controls both endpoint
reflections and remains valid for the nonlinear terminal condition
$p_T=r_x(x_T,y)$.  Our contributions are as follows.  First, we derive a Green
estimate from the scaled matrices $\widehat\Gamma_T$; its kernel separates
interior decay from two endpoint reflections.  Second, we prove the existence
of a horizon-uniform local stationary branch, a pointwise remainder of order
$O(w_T(t)^2\|\Delta\|^2)$, and a uniform Lipschitz estimate for the initial
control.  We also derive explicit a priori and a posteriori radii for existence
and local uniqueness.  Third, for graph boundary conditions, we isolate the
contribution of the terminal boundary and prove that perturbations of the
terminal reward have exponentially small effects on controls near the initial
time and on the gradient of the stationary objective with respect to the
initial state.  Fourth,
for LQ systems with invertible $A$, stabilizable $(A,B)$, $Q\succ0$, $R\succ0$, and
$S\preceq0$, we reduce the matrix criterion to symplectic graph conditions,
prove invertibility for every horizon, and obtain an explicit M\"obius formula
and $O(e^{-2\gamma T})$ convergence of the Riccati matrices and feedback gains.

The main results establish stationary extremals.  An additional second-order
sufficient condition is required to identify them as local optima.
Section~\ref{sec:model} gives the reduction,
Section~\ref{sec:inverse} proves the linear estimate,
Sections~\ref{sec:reconstruction}--\ref{sec:branches} give the nonlinear
results, decay under terminal reward perturbations, and computable certificates,
Section~\ref{sec:lq} treats the LQ
verification, and Section~\ref{sec:numerics} reports the numerical tests.

\section{Model, reduction, and standing assumptions}\label{sec:model}
For a horizon $T\in\mathbb N$, the state is in $\R^d$, the control set $U\subset\R^m$ is compact and convex, and
\begin{equation}\label{eq:dynamics}
      x_{t+1}=\Fbar(x_t,u_t),\qquad 0\le t<T.
\end{equation}
For $y$ in an open set $Y\subset\R^q$, the maximization objective and Hamiltonian are
\begin{equation}\label{eq:objective}
      J_T(u)=r(x_T,y)-\lambda\sum_{t=0}^{T-1}\ell(x_t,u_t),\qquad \lambda>0,
\end{equation}
\begin{equation}\label{eq:ham}
      H(x,u,p)=-\lambda\ell(x,u)+\ip{p}{\Fbar(x,u)}.
\end{equation}

All local branch estimates use fixed neighborhoods, independent of $T$.
The maps $\Fbar$ and $\ell$ are $C^3$ near the reference, and terminal
rewards $r(\cdot,y)$ are $C^3$ in $x$ when nonlinear terminal conditions are
used.  The required uniform derivative bounds are stated below.

An interior stationary reference satisfies
\begin{equation}\label{eq:stationary}
\begin{aligned}
   \xinf=\Fbar(\xinf,\uinf),\qquad
   \pinf&=H_x(\xinf,\uinf,\pinf),\\
   H_u(\xinf,\uinf,\pinf)&=0,
\end{aligned}
\end{equation}
with $\uinf\in\operatorname{int}U$.

\subsection{Pontryagin equations and sign convention}\label{sec:pmp}
We use the maximization convention for \eqref{eq:objective} and the forward
Hamiltonian \eqref{eq:ham}; the costate paired with the step from $t$ to
$t+1$ is $p_{t+1}$.  Consequently
\[
\begin{aligned}
   H_x(x,u,p_+)&=\partial_x\Fbar(x,u)^\top p_+-\lambda\partial_x\ell(x,u),\\
   H_u(x,u,p_+)&=\partial_u\Fbar(x,u)^\top p_+-\lambda\partial_u\ell(x,u).
\end{aligned}
\]
If a trajectory is locally optimal for \eqref{eq:objective}, one-sided
variations of a single control value in the convex set $U$ give
\begin{equation}\label{eq:pmp-system}
\begin{gathered}
    x_{t+1}=\Fbar(x_t,u_t),\qquad
    p_t=H_x(x_t,u_t,p_{t+1}),\\
    p_T=\partial_x r(x_T,y),\\
    \langle H_u(x_t,u_t,p_{t+1}),v-u_t\rangle\le0,\\
    0\le t<T,\qquad v\in U.
\end{gathered}
\end{equation}
These are the standard discrete-time Pontryagin conditions
\citep{halkin1966maximum,bertsekas2017dynamic}; the signs reflect maximization
of terminal reward minus accumulated cost.

All sufficiently small branches considered below remain in
$\operatorname{int}U$.  On such branches the variational inequality is
equivalent to $H_u=0$.  The local stationarity graph constructed next therefore
eliminates $u_t$ as $u_t=\nu(x_t,p_{t+1})$ and turns the first two equations in
\eqref{eq:pmp-system} into the reduced implicit equation \eqref{eq:reduced}.
The endpoint rows are retained rather than eliminated.  Thus a stationary
Pontryagin branch in this paper means a local branch of \eqref{eq:pmp-system}
after interior stationarity reduction.

\subsection{Relation to Bellman recursion}\label{sec:bellman}
For an optimal branch, define
\begin{align*}
 V_{t,T}(x;y)&=\max_{u_t,\ldots,u_{T-1}}
 \left\{r(x_T,y)-\lambda\sum_{k=t}^{T-1}\ell(x_k,u_k)\right\},\\
 Q_{t,T}(x,u;y)&=-\lambda\ell(x,u)
 +V_{t+1,T}(\Fbar(x,u);y),
\end{align*}
with $V_{T,T}=r$.  If the relevant value functions are differentiable and
the Bellman maximizers are interior, then, along the optimal trajectory,
\begin{equation}\label{eq:bellman-pmp}
 \begin{aligned}
 p_{t+1}&=D_xV_{t+1,T}(x_{t+1};y),\\
 D_uQ_{t,T}&=H_u=0,\qquad D_xV_{t,T}=H_x=p_t .
 \end{aligned}
\end{equation}
These identities follow from the chain rule and the envelope theorem.  Thus
the costate equals the state gradient of the future value, and the Pontryagin
equations are the first-order characteristic equations associated with the
differentiable Bellman recursion
\citep{bertsekas2017dynamic,sassano2025infinite}.

The Hamiltonian and the Bellman $Q$-function are nevertheless different: in
general $H(x,u,p_+)\ne Q_{t,T}(x,u;y)$.  If $V_{t+1,T}$ is twice differentiable,
set $x_+=\Fbar(x,u)$ and $p_+=D_xV_{t+1,T}(x_+;y)$.  Then
\begin{equation}\label{eq:bellman-hessian}
 D_{uu}^2Q_{t,T}=H_{uu}
 +\Fbar_u^\top D_{xx}^2V_{t+1,T}\Fbar_u .
\end{equation}
Consequently, \eqref{eq:regular-stationarity} below guarantees only that the
stationarity equation can be solved locally for $u$ at fixed $p_+$.  It does
not imply that $u$ maximizes the Bellman $Q$-function or that the resulting
branch is optimal.

\medskip\noindent\textit{Local control elimination.}
Assume that $\Fbar$ and $\ell$ are $C^3$ near the reference and
\begin{equation}\label{eq:regular-stationarity}
 H_u(\xinf,\uinf,\pinf)=0,\qquad
 \det H_{uu}(\xinf,\uinf,\pinf)\ne0 .
\end{equation}
Put $\zeta_\infty=(\xinf,\pinf)$.  The finite-dimensional implicit function
theorem gives neighborhoods $\Omega$ of $\zeta_\infty$ and
$V\Subset\operatorname{int}U$ of $\uinf$,
and a unique $C^2$ map $\nu:\Omega\to V$ such that
\begin{equation}\label{eq:legendre-graph}
 H_u(x,\nu(x,p_+),p_+)=0,\qquad
 \nu(\xinf,\pinf)=\uinf .
\end{equation}
After shrinking the neighborhoods, $D\nu$ and $D^2\nu$ are bounded and
\begin{equation}\label{eq:nu-derivative}
 D\nu=-H_{uu}^{-1}(H_{ux},H_{up}) .
\end{equation}
These are standard consequences of the implicit function theorem; no local
maximum property is used.

With $z_t=(x_t,p_t)$, the reduced state-costate step is
\begin{equation}\label{eq:reduced}
      \mathcal F(z_t,z_{t+1})=0,
\end{equation}
where
\begin{equation}\label{eq:Fdef}
\mathcal F(z_t,z_{t+1})=
\binom{x_{t+1}-\Fbar(x_t,\nu(x_t,p_{t+1}))}
      {p_t-H_x(x_t,\nu(x_t,p_{t+1}),p_{t+1})}.
\end{equation}
Let $\nu_x=D_x\nu(\xinf,\pinf)$ and $\nu_p=D_{p_+}\nu(\xinf,\pinf)$.  In \eqref{eq:reduced-linearization-blocks}, $\Fbar_x,\Fbar_u$ are the derivatives of the original dynamics $\Fbar$ at $(\xinf,\uinf)$ and all Hamiltonian derivatives are evaluated at $(\xinf,\uinf,\pinf)$ and $H_{x p_+}$ denotes the derivative of $H_x$ with respect to the forward costate argument.  For $h=(\xi,\pi)$ and $h^+=(\xi^+,\pi^+)$, the linearization is $L_0h+L_1h^+$ with
\begin{equation}\label{eq:reduced-linearization-blocks}
\begin{aligned}
L_0h&=
\binom{-(\Fbar_x+\Fbar_u\nu_x)\xi}
      {\pi-(H_{xx}+H_{xu}\nu_x)\xi},\\
L_1h^+&=
\binom{\xi^+-\Fbar_u\nu_p\pi^+}
      {-(H_{x p_+}+H_{xu}\nu_p)\pi^+}.
\end{aligned}
\end{equation}

\medskip\noindent\textit{Uniform local assumptions.}
Given a product neighborhood $Z=Z_x\times Z_p\subset\R^d\times\R^d$ of $0$ contained in $\Omega-\zeta_\infty$, set
\begin{equation}\label{eq:tildeF-def}
\begin{aligned}
      \widetilde{\mathcal F}(z,z^+)
      &:=\mathcal F\bigl((\xinf,\pinf)+z,\;(\xinf,\pinf)+z^+\bigr),\\
      &\hspace{1.4em} z,z^+\in Z .
\end{aligned}
\end{equation}
The reduced state--costate step \eqref{eq:reduced} on
$(\xinf,\pinf)+Z$ is $\widetilde{\mathcal F}(z,z^+)=0$.  We fix $Z$,
a convex $U_0\Subset\operatorname{int}U$, and compact product balls
$B_x\Subset Z_x$, $B_p\Subset Z_p$ such that $\nu$ maps all mixed
arguments $(\xinf+\xi,\pinf+\pi^+)$ into $U_0$ and, for some constants
$M_{\rm int},M_{\rm ter}$,
\[
\begin{aligned}
      \sup_{(\xinf,\pinf)+Z}\|D\nu\|
      +\sup_{(\xinf,\pinf)+Z}\|D^2\nu\|
      &\le M_{\rm int},\\
      \sup_{(z,z^+)\in(B_x\times B_p)\times(B_x\times B_p)}
      \|D^2\widetilde{\mathcal F}(z,z^+)\|
      &\le M_{\rm int}.
\end{aligned}
\]
For a reference $y_\infty\in Y$, assume that a neighborhood
$Y_0\Subset Y$ satisfies
\[
       \sup_{x\in \xinf+B_x,\ y\in Y_0,\ 1\le j\le3}
       \|D_x^j r(x,y)\|\le M_{\rm ter}.
\]
The maps $y\mapsto r_x(x,y)$ and $y\mapsto r_{xx}(x,y)$ are assumed
continuous uniformly in this tube, and locally Lipschitz uniformly in $x$
when joint Lipschitz dependence on $(x_{\rm in},y)$ is asserted.  All radii
below are chosen inside this fixed tube.  Finally, direct verification from problem data
uses
\begin{equation}\label{eq:explicit-transition}
       \det L_1\ne0,\qquad M:=-L_1^{-1}L_0 .
\end{equation}
Thus the linearized interior equation is
$h_{t+1}=Mh_t+L_1^{-1}f_t$.  The alternative abstract formulation only
requires the Green property stated next.

\section{Linear endpoint estimates}\label{sec:inverse}
The nonlinear argument requires the following uniform inverse estimate.
\begin{definition}[Uniform Green property]\label{cond:green}
Let $L:\mathbb R^n\times\mathbb R^n\to\mathbb R^n$ be linear and let $\Bzero,\BT\in\mathbb R^{n\times n}$ be fixed boundary-row matrices, not necessarily invertible individually.  Given constants $C_G\ge1$, $\gamma>0$, and $T_0\ge1$, the triple $(L,\Bzero,\BT)$ has the uniform Green property with these constants if, for every $T\ge T_0$, every interior forcing $f=(f_0,\ldots,f_{T-1})$, and every boundary datum $b\in\R^n$, the following two conditions hold:
\begin{enumerate}
\item[(WP)] The linear problem
\begin{equation}\label{eq:linear-bvp}
\begin{aligned}
      L[h_t,h_{t+1}]&=f_t,\qquad 0\le t<T,\\
      \Bzero h_0+\BT h_T&=b.
\end{aligned}
\end{equation}
has a unique solution $h=(h_0,\ldots,h_T)$.
\item[(G)] With fixed finite-dimensional norms and
\begin{equation}\label{eq:kappa-kernel}
\begin{aligned}
      \kappa_T(t,s)=e^{-\gamma|t-s|}
      +e^{-\gamma(t+T-s)}+&e^{-\gamma(T-t+s)},\\ &0\le s<T,
\end{aligned}
\end{equation}
the solution from \emph{(WP)} satisfies
\begin{equation}\label{eq:green-estimate}
\begin{aligned}
      \norm{h_t}
      &\le C_G\Bigl[(e^{-\gamma t}+e^{-\gamma(T-t)})\norm b\\
      &\hspace{4.0em}
      +\sum_{s=0}^{T-1}\kappa_T(t,s)\norm{f_s}\Bigr]
\end{aligned}
\end{equation}
for all $0\le t\le T$.
\end{enumerate}
When only the property is named, the constants are independent of $T$.
\end{definition}

The last two terms in \eqref{eq:kappa-kernel} are the finite-horizon boundary
reflections.  We write $\mathcal L_T^{\mathrm{bd}}b$ and
$\mathcal L_T^{\mathrm{for}}f$ for the boundary and forcing solutions.

A concrete verification comes from the explicit transition
\eqref{eq:explicit-transition} and a uniformly invertible scaled boundary
matrix.

\begin{theorem}[Finite-horizon Green certificate]\label{prop:scaled-green}
Consider, for a horizon $N$,
\begin{equation}\label{eq:explicit-linear-bvp}
       h_{t+1}=Mh_t+g_t,\quad 0\le t<N,\qquad \Bzero h_0+\BT h_N=b,
\end{equation}
on $\R^n$.  Suppose $M$ is hyperbolic and let the columns of $V_s$ and $V_u$ be bases of its stable and unstable subspaces, with $\dim E_s+\dim E_u=n$; write $MV_s=V_sD_s$ and $MV_u=V_uD_u$, and assume that, for some $K\ge1$ and $\gamma>0$,
$\norm{V_sD_s^k}\le K e^{-\gamma k}$ and $\norm{V_uD_u^{-k}}\le K e^{-\gamma k}$ for all $k\ge0$.
For each horizon define the scaled boundary matrix
\begin{equation}\label{eq:scaled-Gamma-general}
       \widehat\Gamma_N=
       \Bigl[
       \Bzero V_s+\BT V_sD_s^{\,N}\quad
       \Bzero V_uD_u^{-N}+\BT V_u
       \Bigr],
\end{equation}
where the superscripts are powers, not transposes.  If $\widehat\Gamma_N$ is invertible for every $N\ge T_0$ and $\sup_{N\ge T_0}\norm{\widehat\Gamma_N^{-1}}<\infty$, then the uniform Green property of Definition~\ref{cond:green} holds for \eqref{eq:explicit-linear-bvp}, with constants depending only on the dichotomy bounds, the spectral-projector norms, the fixed boundary matrices, and the displayed uniform bound for $\widehat\Gamma_N^{-1}$.  Moreover, the uniform inverse bound follows if
\begin{equation}\label{eq:scaled-Gamma-limit}
       \widehat\Gamma_\infty:=\bigl[\Bzero V_s\quad \BT V_u\bigr]
\end{equation}
is invertible and, for some cutoff $N_*$ large enough for the Neumann argument, each matrix $\widehat\Gamma_N$ with $T_0\le N<N_*$ is invertible.
\end{theorem}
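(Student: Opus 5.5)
We follow the standard dichotomy construction: a particular solution built from the spectral projectors, plus a homogeneous correction fixed by the boundary rows through $\widehat\Gamma_N$. Let $P_s,P_u$ be the spectral projectors onto $E_s,E_u$ along the complementary subspace. Write $P_s=V_sW_s$ and $P_u=V_uW_u$, where $[W_s;W_u]=[V_s\ V_u]^{-1}$, so that $\|P_{s,u}\|$ and $\|W_{s,u}\|$ are fixed constants.

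\textbf{Particular solution.} For the forcing $g$ we set
\[
\tilde h_t=\sum_{s<t}V_sD_s^{\,t-1-s}W_sg_s-\sum_{s\ge t}^{N-1}V_uD_u^{-(s+1-t)}W_ug_s .
\]
This satisfies $\tilde h_{t+1}=M\tilde h_t+g_t$. By the assumed bounds, $\|\tilde h_t\|\le K'\sum_s e^{-\gamma|t-s|}\|g_s\|$, where we absorb $e^{\gamma}$ into $K'$.

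\textbf{Homogeneous correction.} The general solution is $h_t=\tilde h_t+V_sD_s^{\,t}a+V_uD_u^{-(N-t)}c$ for coefficient vectors $a,c$. We parametrize the unstable part from the right end on purpose, so that every term is bounded by a decaying exponential. Substituting into the boundary condition gives
\[
\widehat\Gamma_N\binom{a}{c}=b-\Bzero\tilde h_0-\BT\tilde h_N .
\]
This is exactly \eqref{eq:scaled-Gamma-general}. Hence (WP) follows from invertibility, since the parametrization $(a,c)$ is bijective onto the homogeneous solutions.

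\textbf{Green estimate.} Put $C_\Gamma=\sup\|\widehat\Gamma_N^{-1}\|$. The residual satisfies
\[
\|\Bzero\tilde h_0\|\le\|\Bzero\|K'\sum_s e^{-\gamma s}\|g_s\|,\qquad
\|\BT\tilde h_N\|\le\|\BT\|K'\sum_s e^{-\gamma(N-s)}\|g_s\|.
\]
Thus $\|(a,c)\|\le C_\Gamma\bigl(\|b\|+C\sum_s(e^{-\gamma s}+e^{-\gamma(N-s)})\|g_s\|\bigr)$. At time $t$ the homogeneous part is bounded by $K(e^{-\gamma t}\|a\|+e^{-\gamma(N-t)}\|c\|)$. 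Multiplying out gives four kinds of product terms:
\begin{itemize}
\item $e^{-\gamma t}e^{-\gamma s}=e^{-\gamma(t+s)}\le e^{-\gamma|t-s|}$;
\item $e^{-\gamma(N-t)}e^{-\gamma(N-s)}\le e^{-\gamma|t-s|}$;
\item the cross term $e^{-\gamma t}e^{-\gamma(N-s)}=e^{-\gamma(t+N-s)}$;
\item the cross term $e^{-\gamma(N-t)}e^{-\gamma s}=e^{-\gamma(N-t+s)}$.
\end{itemize}
The two cross terms are precisely the reflection terms of \eqref{eq:kappa-kernel}, with $T=N$. The forcing in the paper's setting is $g_t=L_1^{-1}f_t$, which only contributes a factor $\|L_1^{-1}\|$. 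This proves (G) with $C_G$ depending only on $K$, $\gamma$, the projector norms, $\|\Bzero\|$, $\|\BT\|$ and $C_\Gamma$.

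\textbf{Uniform bound from the limit matrix.} We have
\[
\|\widehat\Gamma_N-\widehat\Gamma_\infty\|\le\|\BT\|\,\|W\|Ke^{-\gamma N}+\|\Bzero\|\,\|W\|Ke^{-\gamma N},
\]
where we write $V_sD_s^N=V_sD_s^N$ and bound its coefficient map via $D_s^N=W_sV_sD_s^N$, so $\|D_s^N\|\le\|W_s\|Ke^{-\gamma N}$, and similarly for $D_u^{-N}$. Choose $N_*$ such that $\|\widehat\Gamma_\infty^{-1}\|\cdot\|\widehat\Gamma_N-\widehat\Gamma_\infty\|\le1/2$ for all $N\ge N_*$. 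A Neumann series then gives $\|\widehat\Gamma_N^{-1}\|\le2\|\widehat\Gamma_\infty^{-1}\|$ for those $N$. The finitely many invertible matrices with $T_0\le N<N_*$ add only a finite maximum, so the supremum is finite.

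The main obstacle is the bookkeeping in the Green estimate step. One must choose the right-end normalization of the unstable mode, since otherwise $D_u^{N}$ enters unscaled and the bound is not uniform in $N$. One must also check carefully that the endpoint residuals generate exactly the reflection kernels, including the index shift ($s$ versus $s+1$), which only costs a factor $e^{\gamma}$.
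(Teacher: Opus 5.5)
Your proposal is correct and follows the paper's proof essentially step for step. It uses the same projector-based particular solution, the same right-end normalization $V_uD_u^{t-N}$ of the unstable homogeneous mode that yields $\widehat\Gamma_N$, the same four-product bookkeeping in which the two cross terms are exactly the reflection kernels, and the same Neumann argument around $\widehat\Gamma_\infty$ with the finitely many short horizons handled separately. (The detour through $\|W\|$ in the last step is unnecessary, since $\|\BT V_sD_s^N\|\le\|\BT\|Ke^{-\gamma N}$ and $\|\Bzero V_uD_u^{-N}\|\le\|\Bzero\|Ke^{-\gamma N}$ hold directly.)
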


\noindent We say that \emph{scaled boundary transversality} holds if $\widehat\Gamma_N$ in \eqref{eq:scaled-Gamma-general} is invertible for every $N\ge T_0$ with $\sup_{N\ge T_0}\|\widehat\Gamma_N^{-1}\|<\infty$.  The bases are fixed once and for all: replacing them by a fixed pair $V_sA_s,V_uA_u$ right-multiplies $\widehat\Gamma_N$ by $\operatorname{diag}(A_s,A_u)$, so the property is basis independent up to condition numbers, while no $N$-dependent rescaling is allowed.

\begin{proof}
Let $V=[V_s\ V_u]$, write $V^{-1}=\binom{W_s}{W_u}$, and set
$\Pi_s=V_sW_s$, $\Pi_u=V_uW_u$.  Every homogeneous solution has the unique
form
\[
 q_t=V_sD_s^ta+V_uD_u^{t-N}\eta .
\]
Its boundary equation is
$\widehat\Gamma_N\binom{a}{\eta}=b$, so uniform invertibility gives
\begin{equation}\label{eq:homogeneous-green}
 \|q_t\|\le C(e^{-\gamma t}+e^{-\gamma(N-t)})\|b\|.
\end{equation}

For forcing, define
\[
 p_t=\sum_{j<t}M^{t-1-j}\Pi_sg_j
      -\sum_{j\ge t}(M|_{E_u})^{t-1-j}\Pi_ug_j .
\]
Then $p_{t+1}-Mp_t=g_t$ and
\begin{align}
 \|p_t\|&\le C\sum_j e^{-\gamma|t-j|}\|g_j\|,\label{eq:particular-bound}\\
 \|p_0\|+\|p_N\|&\le C\sum_j
 (e^{-\gamma j}+e^{-\gamma(N-j)})\|g_j\|.\label{eq:endpoint-particular}
\end{align}
Correct the boundary residual
$b-\Bzero p_0-\BT p_N$ by a homogeneous solution.  Multiplying
\eqref{eq:endpoint-particular} by the two factors in
\eqref{eq:homogeneous-green} produces four terms.  The products
$e^{-\gamma(t+j)}$ and $e^{-\gamma(2N-t-j)}$ are bounded by
$e^{-\gamma|t-j|}$; the other two are
$e^{-\gamma(t+N-j)}$ and $e^{-\gamma(N-t+j)}$.  Together with
\eqref{eq:particular-bound}, these are exactly the terms in
$\kappa_N(t,j)$.  This proves \eqref{eq:green-estimate}.  With zero forcing
and boundary datum, invertibility of $\widehat\Gamma_N$ gives uniqueness.

Finally,
$\|\widehat\Gamma_N-\widehat\Gamma_\infty\|\le Ce^{-\gamma N}$.
Neumann's lemma covers all sufficiently large $N$, and the finitely many
remaining inverses give one uniform bound.
\end{proof}

This scaled boundary transversality property is also open: under sufficiently
small continuous perturbations of $M,\Bzero,\BT$, the
Riesz projectors and local bases vary continuously, a common spectral gap
persists, and $\widehat\Gamma_N\to\widehat\Gamma_\infty$ uniformly.  Neumann's
lemma covers large $N$, while continuity of the finitely many remaining least
singular values covers short horizons.  Any exponential rate below the stable
and unstable spectral separation is admissible after increasing $K$.

The two-sided kernel above is needed for general endpoint data.  For Pontryagin
graph rows, terminal-only data admit a sharper, one-sided estimate.
Write $P_x=(I\;0)$ and $R_S=(-S\;I)$ on $\R^d\times\R^d$.

\begin{lemma}[One-sided terminal Green estimate]\label{lem:right-green}
In Theorem~\ref{prop:scaled-green}, let $n=2d$, $\dim E_s=\dim E_u=d$, and
write $V_s=\binom{X_s}{P_s}$ and $V_u=\binom{X_u}{P_u}$.  Let $\mathcal S$
be a bounded set of matrices for which $X_s$ is nonsingular and
\begin{equation}\label{eq:right-scaled-gamma}
 \widehat\Gamma_T(S)=
 \begin{pmatrix}
 X_s&X_uD_u^{-T}\\
 (P_s-SX_s)D_s^T&P_u-SX_u
 \end{pmatrix}
\end{equation}
has a uniformly bounded inverse for $S\in\mathcal S$ and $T\ge T_0$.
For $0<\alpha_{\rm ter}<\gamma$, put
$v_T(t)=e^{-\alpha_{\rm ter}(T-t)}$.  The solution of
\begin{equation}\label{eq:right-green-problem}
 h_{t+1}=Mh_t+g_t,\qquad P_xh_0=0,\qquad R_Sh_T=b_T
\end{equation}
satisfies
\begin{equation}\label{eq:right-green-bound}
 \max_{0\le t\le T}\frac{\|h_t\|}{v_T(t)}
 \le C_{\to}\left(\|b_T\|+
 \max_{0\le s<T}\frac{\|g_s\|}{v_T(s+1)}\right),
\end{equation}
where $C_{\to}$ is independent of $(S,T)$.
\end{lemma}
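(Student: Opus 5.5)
The plan is to rerun the proof of Theorem~\ref{prop:scaled-green} with the boundary rows $\Bzero=\binom{P_x}{0}$ and $\BT=\binom{0}{R_S}$. For these rows, \eqref{eq:scaled-Gamma-general} is exactly \eqref{eq:right-scaled-gamma}. The two-sided estimate \eqref{eq:green-estimate} is too weak here: it allows an $O(e^{-\gamma t})\|b_T\|$ contribution near $t=0$, which is not $O(v_T(t))$. So I will keep the same splitting $h=p+q$ but redo each bound in the weighted norm. I will also use the specific block structure, in particular the invertibility of the fixed matrix $X_s$, which does not depend on $S$. Throughout, set $G:=\max_s\|g_s\|/v_T(s+1)$, so that $\|g_s\|\le G\,e^{-\alpha_{\rm ter}(T-s-1)}$.

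\emph{Step 1 (particular solution).} Take $p_t$ as in the proof of Theorem~\ref{prop:scaled-green}. By \eqref{eq:particular-bound},
\[
\|p_t\|\le CG\sum_j e^{-\gamma|t-j|}e^{-\alpha_{\rm ter}(T-j-1)}
= CG\,v_T(t)\sum_j e^{-\gamma|t-j|+\alpha_{\rm ter}(t-j)+\alpha_{\rm ter}} .
\]
The last sum is bounded independently of $t$ and $T$ because $\alpha_{\rm ter}<\gamma$. The endpoint values follow from the same computation. First, $\|p_T\|\le C\sum_j e^{-\gamma(T-j)}\|g_j\|\le CG$. Second, $\|p_0\|\le C\sum_j e^{-\gamma j}\|g_j\|\le CG\,e^{-\alpha_{\rm ter}T}\sum_j e^{-(\gamma-\alpha_{\rm ter})j}\le CG\,e^{-\alpha_{\rm ter}T}$. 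The point is that the left boundary sees the forcing only through an exponentially small residual.

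\emph{Step 2 (homogeneous correction).} Write $q_t=V_sD_s^ta+V_uD_u^{t-T}\eta$. The boundary equations are
\[
\widehat\Gamma_T(S)\binom{a}{\eta}=\binom{r_0}{r_T},\qquad r_0=-P_xp_0,\qquad r_T=b_T-R_Sp_T .
\]
Since $\mathcal S$ is bounded, $\|R_S\|$ is uniformly bounded. The uniform bound on $\widehat\Gamma_T(S)^{-1}$ then gives $\|a\|+\|\eta\|\le C(\|b_T\|+G)$. In particular $\|V_uD_u^{t-T}\eta\|\le Ke^{-\gamma(T-t)}\|\eta\|\le C v_T(t)(\|b_T\|+G)$.

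\emph{Step 3 (main obstacle).} The stable part $V_sD_s^ta$ needs $\|a\|\lesssim e^{-\alpha_{\rm ter}T}(\|b_T\|+G)$; the crude bound from Step~2 only gives $e^{-\gamma t}$ decay, which fails near $t=0$. Here I will use the first block row $X_sa+X_uD_u^{-T}\eta=r_0$ and the invertibility of the fixed matrix $X_s$. Solving,
\[
a=X_s^{-1}\bigl(r_0-X_uD_u^{-T}\eta\bigr),
\]
so by Step~1, Step~2 and $\|X_uD_u^{-T}\|\le Ke^{-\gamma T}$,
\[
\|a\|\le C\bigl(e^{-\alpha_{\rm ter}T}G+e^{-\gamma T}(\|b_T\|+G)\bigr)\le Ce^{-\alpha_{\rm ter}T}(\|b_T\|+G).
\]
Hence $\|V_sD_s^ta\|\le Ke^{-\gamma t}\|a\|\le C e^{-\alpha_{\rm ter}(T-t)}(\|b_T\|+G)$, using $e^{-\gamma t}\le e^{\alpha_{\rm ter}t}$.

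\emph{Step 4 (conclusion).} Adding the bounds on $p_t$ and both parts of $q_t$ yields \eqref{eq:right-green-bound}. The constant $C_{\to}$ depends only on $K$, $\gamma-\alpha_{\rm ter}$, the projector norms, $\|X_s^{-1}\|$, $\sup_{\mathcal S}\|S\|$ and the uniform bound on $\widehat\Gamma_T(S)^{-1}$. Uniqueness of the solution of \eqref{eq:right-green-problem} follows from the invertibility of $\widehat\Gamma_T(S)$, as in Theorem~\ref{prop:scaled-green}.
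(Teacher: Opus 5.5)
Your proposal is correct and follows essentially the same route as the paper's proof: the same particular/homogeneous splitting, the boundary system $\widehat\Gamma_T(S)(a,\eta)^\top=(-P_xp_0,\;b_T-R_Sp_T)^\top$ with $\|P_xp_0\|=O(v_T(0)G)$, the uniform inverse to bound $\eta$, and the first block row with $X_s^{-1}$ to gain the extra factor $v_T(0)$ on $a$. There is one harmless slip: in Step~1 the identity should read $e^{-\alpha_{\rm ter}(T-j-1)}=v_T(t)\,e^{\alpha_{\rm ter}(j-t)+\alpha_{\rm ter}}$ (you wrote $t-j$), but the resulting sum is still bounded because the two geometric ratios are $e^{-(\gamma\pm\alpha_{\rm ter})}$.
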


\begin{proof}
In the dichotomy representation, the boundary system is
\eqref{eq:right-scaled-gamma}$(a,\eta)^\top=(b_L,b_R)^\top$, where, for
$G=\max_j\|g_j\|/v_T(j+1)$,
\[
 \begin{aligned}
 b_L&=P_xV_u\sum_{j<T}D_u^{-1-j}W_ug_j,\\
 b_R&=b_T-R_SV_s\sum_{j<T}D_s^{T-1-j}W_sg_j.
 \end{aligned}
\]
Geometric sums give
$b_L=O(v_T(0)G)$ and $b_R=O(\|b_T\|+G)$.  Uniform inversion bounds $\eta$;
the first row and $X_s^{-1}$ then give the extra estimate
$a=O(v_T(0)(\|b_T\|+G))$.  Substitution uses the summable ratios
$e^{-(\gamma+\alpha_{\rm ter})k}$ and
$e^{-(\gamma-\alpha_{\rm ter})k}$, proving the claim.
\end{proof}

The following quantitative row-perturbation result will be used for the
parameter-dependent terminal Hessian in Section~\ref{sec:branches}.

\begin{lemma}[Quantitative perturbation of endpoint rows]
\label{lem:boundary-row-perturbation}
Suppose that $(L,\Bzero,\BT)$ has the uniform Green property with constants
$(C_G,\gamma,T_0)$.  All induced operator norms below use the fixed
finite-dimensional norms of Definition~\ref{cond:green}.  For endpoint-row
perturbations $E_0,E_T$, put
\begin{equation}\label{eq:row-perturbation-size}
 \varepsilon=\|E_0\|+\|E_T\|,
 \qquad \vartheta=2C_G\varepsilon .
\end{equation}
If $\vartheta<1$, then $(L,\Bzero+E_0,\BT+E_T)$ has the same Green exponent
and initial horizon, with the common admissible constant
\begin{equation}\label{eq:perturbed-green-constant}
 C_G^{\rm pert}=C_G\frac{1+\vartheta}{1-\vartheta}.
\end{equation}
The conclusion is uniform whenever $\vartheta$ is bounded above by a fixed
number smaller than one.
\end{lemma}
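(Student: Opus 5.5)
We treat the perturbed boundary problem as the unperturbed one with a modified boundary datum, and close the estimate with a fixed-point argument. Fix $T\ge T_0$, a forcing $f$ and a datum $b$. A sequence $h$ solves the perturbed problem exactly when
\[
 L[h_t,h_{t+1}]=f_t,\qquad \Bzero h_0+\BT h_T=b-E_0h_0-E_Th_T .
\]
By (WP) for $(L,\Bzero,\BT)$, for every $c\in\R^n$ there is a unique solution $h(c)=\mathcal L_T^{\rm bd}c+\mathcal L_T^{\rm for}f$ of the unperturbed problem with datum $c$. The perturbed problem is therefore equivalent to the fixed-point equation
\[
 c=\Phi(c):=b-E_0h_0(c)-E_Th_T(c)
\]
in $\R^n$, with $h=h(c)$. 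So the plan is to show that this finite-dimensional fixed-point problem is uniquely solvable, and then to insert the fixed point into (G).

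The key estimate is that the boundary map has norm at most $2C_G$ at the endpoints. Evaluating \eqref{eq:green-estimate} with $f=0$ at $t=0$ and $t=T$ gives $\|h_0\|,\|h_T\|\le C_G(1+e^{-\gamma T})\|c\|\le 2C_G\|c\|$. Hence
\[
 \|\Phi(c)-\Phi(c')\|\le(\|E_0\|+\|E_T\|)\,2C_G\|c-c'\|=\vartheta\|c-c'\|,
\]
and $\Phi$ is a contraction when $\vartheta<1$. Existence and uniqueness of $c$ follow, and so does (WP) for the perturbed triple. Equivalently, the map $c\mapsto c+E_0h_0(c)+E_Th_T(c)$ is $I+K$ with $\|K\|\le\vartheta$, so Neumann's lemma applies.

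It remains to bound $c$ and recover (G). From the fixed-point equation,
\[
 \|c\|\le\|b\|+\varepsilon\max(\|h_0\|,\|h_T\|).
\]
Applying (G) at the endpoints gives
\[
 \|h_0\|,\|h_T\|\le 2C_G\|c\|+C_G\max_{t\in\{0,T\}}\sum_s\kappa_T(t,s)\|f_s\|.
\]
Combining the two,
\[
 (1-\vartheta)\|c\|\le\|b\|+\varepsilon C_G\,\Sigma_{\rm end},
\]
where $\Sigma_{\rm end}$ denotes the endpoint sums of the kernel. Inserting $c$ into \eqref{eq:green-estimate} for the unperturbed problem then produces the term $C_G(e^{-\gamma t}+e^{-\gamma(T-t)})\|c\|$ plus the original forcing term.

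The main obstacle is this last step: turning $\Sigma_{\rm end}$, multiplied by the boundary profile, back into $\kappa_T(t,s)$ with the constant $C_G(1+\vartheta)/(1-\vartheta)$. The endpoint kernels satisfy $\kappa_T(0,s)\le e^{-\gamma s}+2e^{-\gamma(T-s)}$, and symmetrically at $t=T$. Multiplying by $e^{-\gamma t}$ gives $e^{-\gamma(t+s)}\le e^{-\gamma|t-s|}$ and $e^{-\gamma(t+T-s)}$, and multiplying by $e^{-\gamma(T-t)}$ gives the mirror terms. As in the proof of Theorem~\ref{prop:scaled-green}, every product is therefore dominated by terms of $\kappa_T(t,s)$. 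I would track the numerical factors so that the cross contribution is $\le\vartheta C_G/(1-\vartheta)$ times $\kappa_T$. If the bookkeeping produces a slightly larger factor, I would first try measuring the endpoint contributions in the weighted quantity $\max(\|h_0\|,\|h_T\|)$ and using $2C_G\varepsilon=\vartheta$ directly, so that $1+\vartheta/(1-\vartheta)\cdot(\cdot)\le(1+\vartheta)/(1-\vartheta)$. Uniformity in $T$ is automatic, because every constant used is $T$-independent, and the same computation shows the constants are uniform whenever $\vartheta$ stays below a fixed number smaller than one.
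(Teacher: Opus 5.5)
Your proposal is correct and is essentially the paper's argument. The paper also rewrites the perturbed problem through the unperturbed inverse with datum $b-E_0h_0-E_Th_T$, and contracts with factor $\vartheta=2C_G\varepsilon$ using the endpoint bound $2C_G$ for $\mathcal L_T^{\rm bd}$. It then absorbs $\vartheta$ in an endpoint estimate and substitutes back into the Green estimate; it iterates on $h$ and tracks $\max\{\|h_0\|,\|h_T\|\}$ where you iterate on $c$ and track $\|c\|$, which is only cosmetic.

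The bookkeeping you left open does close. Put $d_T(t)=e^{-\gamma t}+e^{-\gamma(T-t)}$ and suppose $d_T(t)\,\Sigma_{\rm end}\le\mu\sum_s\kappa_T(t,s)\|f_s\|$. Then the forcing coefficient is $C_G\bigl(1+\mu\vartheta/(2(1-\vartheta))\bigr)$, so any $\mu\le4$ gives $C_G(1+\vartheta)/(1-\vartheta)$. The paper takes $\mu=4$: one factor $2$ comes from $\kappa_T(0,s),\kappa_T(T,s)\le2(e^{-\gamma s}+e^{-\gamma(T-s)})$, and another from $d_T(t)(e^{-\gamma s}+e^{-\gamma(T-s)})\le2\kappa_T(t,s)$. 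If instead you multiply the exact endpoint kernels term by term, you get $\mu=2$, which is your sharper target $C_G/(1-\vartheta)$.
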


\begin{proof}
Use the unperturbed inverse to write the perturbed problem as
\begin{equation}\label{eq:row-perturbation-fixed-point}
 h=\mathcal L_T^{\rm for}f+
   \mathcal L_T^{\rm bd}(b-E_0h_0-E_Th_T).
\end{equation}
Because
$\max_{0\le t\le T}\|(\mathcal L_T^{\rm bd}c)_t\|\le2C_G\|c\|$, its
right-hand side is a contraction with factor $\vartheta$, proving existence.
Set
\[
 \begin{aligned}
 m&=\max\{\|h_0\|,\|h_T\|\},\\
 F_\partial(f)&=\sum_{s=0}^{T-1}
 (e^{-\gamma s}+e^{-\gamma(T-s)})\|f_s\|.
 \end{aligned}
\]
At $t=0,T$, the boundary factor and the Green kernel are bounded by twice
the corresponding terms in $\|b\|+F_\partial(f)$.  Hence
\begin{equation}\label{eq:perturbed-endpoint-bound}
 m\le2C_G(\|b\|+F_\partial(f)+\varepsilon m)
 \le\frac{2C_G}{1-\vartheta}(\|b\|+F_\partial(f)).
\end{equation}
Writing $d_T(t)=e^{-\gamma t}+e^{-\gamma(T-t)}$, direct multiplication gives
\begin{equation}\label{eq:boundary-weight-product}
 d_T(t)(e^{-\gamma s}+e^{-\gamma(T-s)})
 \le2\kappa_T(t,s).
\end{equation}
Indeed, the cross-products are the two reflected kernels, whereas each of
the other products is bounded by $e^{-\gamma|t-s|}$.  Substitution of
\eqref{eq:perturbed-endpoint-bound} into the original pointwise Green
estimate yields
\[
 \begin{aligned}
 \|h_t\|&\le\frac{C_G}{1-\vartheta}d_T(t)\|b\|\\
 &\quad{}+\frac{C_G(1+\vartheta)}{1-\vartheta}
 \sum_{s=0}^{T-1}\kappa_T(t,s)\|f_s\|,
 \end{aligned}
\]
and hence Definition~\ref{cond:green} with
\eqref{eq:perturbed-green-constant}.  For homogeneous data,
\eqref{eq:perturbed-endpoint-bound} gives $h_0=h_T=0$, and unperturbed
well-posedness then gives $h=0$; thus the perturbed problem is well posed.
\end{proof}

\section{Nonlinear reconstruction in weighted spaces}\label{sec:reconstruction}

The nonlinear fixed-point argument uses endpoint weights matched to the Green kernel.  For $0<\alpha<\gamma/2$, set
\begin{equation}\label{eq:weight}
      w_T(t)=e^{-\alpha t}+e^{-\alpha(T-t)}.
\end{equation}
For each horizon define the weighted space
\[
\begin{aligned}
   X_T&=\{z=(z_0,\ldots,z_T):\|z\|_{w,T}<\infty\},\\
   \|z\|_{w,T}&=\sup_{0\le t\le T}\frac{\|z_t\|}{w_T(t)} .
\end{aligned}
\]
All differentiability statements below concern the family of maps into these horizon-dependent spaces uniformly in $T$.

\medskip\noindent\textit{Weighted convolution.}
If $0<\alpha<\gamma/2$, then there is a constant $C_{\rm conv}$, independent of $T$ and $t$, such that
\[
      \sum_{s=0}^{T-1}\kappa_T(t,s)w_T(s)^2
      \le C_{\rm conv}w_T(t)^2.
\]
Moreover $e^{-\gamma t}+e^{-\gamma(T-t)}\le C_{\rm conv}w_T(t)^2$, with $\kappa_T$ from \eqref{eq:kappa-kernel}.
For example, with $q_-=e^{-(\gamma-2\alpha)}$ and
$q_+=e^{-(\gamma+2\alpha)}$, one may use the explicit value
\begin{equation}\label{eq:explicit-convolution-constant}
 C_{\rm conv}=4\left(\frac1{1-q_-}+\frac{q_+}{1-q_+}\right).
\end{equation}
To verify this value, put $\chi_s^-=e^{-\alpha s}$,
$\chi_s^+=e^{-\alpha(T-s)}$, and define
\[
 A_{\gamma,\alpha}=\frac1{1-q_-}+\frac{q_+}{1-q_+}.
\]
Set
$\kappa_T^{\rm ref}(t,s)=e^{-\gamma(t+T-s)}+e^{-\gamma(T-t+s)}$.
Since $w_T(s)^2\le2((\chi_s^-)^2+(\chi_s^+)^2)$, splitting the interior
sum at $s=t$ gives the two geometric ratios $q_-$ and $q_+$ and hence
\begin{align}
 \sum_{s=0}^{T-1}e^{-\gamma|t-s|}w_T(s)^2
 &\le2A_{\gamma,\alpha}\bigl((\chi_t^-)^2+(\chi_t^+)^2\bigr),
 \label{eq:interior-weight-convolution}\\
 \sum_{s=0}^{T-1}\kappa_T^{\rm ref}(t,s)w_T(s)^2
 &\le2A_{\gamma,\alpha}\bigl((\chi_t^-)^2+(\chi_t^+)^2\bigr).
 \label{eq:reflected-weight-convolution}
\end{align}
For the reflected kernels, the coefficients of $(\chi_t^-)^2$ are
$q_+/(1-q_+)$ and $1/(1-q_-)$; those of $(\chi_t^+)^2$ are
$q_-/(1-q_-)$ and $1/(1-q_+)$.  Both pairs sum to
$A_{\gamma,\alpha}$.  Because
$(\chi_t^-)^2+(\chi_t^+)^2\le w_T(t)^2$, the two estimates prove
\eqref{eq:explicit-convolution-constant}.  Moreover, $\gamma>2\alpha$
implies
\[
 e^{-\gamma t}+e^{-\gamma(T-t)}
 \le(\chi_t^-)^2+(\chi_t^+)^2\le w_T(t)^2.
\]
Thus the same squared weight controls the interior and endpoint remainders.

\medskip\noindent\textit{Quadratic residual.}
Let $\mathcal F$ be $C^2$ near $(0,0)$, assume $\mathcal F(0,0)=0$, and let $L=D\mathcal F(0,0)$.  Define
\[
      N(a,b)=\mathcal F(a,b)-L(a,b).
\]
After shrinking the neighborhood, there is a constant $C_N$ such that
\[
      \norm{N(a,b)}\le C_N(\norm a+\norm b)^2
\]
and
\[
\begin{aligned}
      \norm{N(a,b)-N(\tilde a,\tilde b)}
      &\le C_N(\norm a+\norm b+\norm{\tilde a}+\norm{\tilde b})\\
      &\quad{}\times
          (\norm{a-\tilde a}+\norm{b-\tilde b}).
\end{aligned}
\]
Both estimates follow directly from Taylor's formula with integral remainder
on a fixed convex neighborhood.

We now combine the inverse estimate, the convolution bound, and the residual
estimate.  Affine endpoint rows are included as the case with zero remainder.
\begin{theorem}[Uniform reconstruction with smooth endpoint rows]
\label{thm:nonlinear-boundary}
Let $\mathcal F:\R^n\times\R^n\to\R^n$ be $C^2$ near $(0,0)$, satisfy
$\mathcal F(0,0)=0$, and have the quadratic residual bounds above.  Assume
that its linearized equation with rows
$(\Bzero,\BT)$ satisfies Definition~\ref{cond:green} with exponent $\gamma$.
Fix $0<\alpha<\gamma/2$.  Let $\B:\R^n\times\R^n\to\R^n$ be $C^2$ near the
origin, with bounded second derivative, and satisfy
$\B(0,0)=0$ and
$D\B(0,0)[a,b]=\Bzero a+\BT b$ (the affine row is allowed).  Put
$R_B(a,b)=\B(a,b)-\Bzero a-\BT b$ and choose $C_B$ such that
\begin{align}
 \|R_B(a,b)\|&\le C_B(\|a\|+\|b\|)^2,\label{eq:boundary-quadratic}\\
 \|R_B(a,b)-R_B(\tilde a,\tilde b)\|
 &\le C_B\Sigma\bigl(\|a-\tilde a\|+\|b-\tilde b\|\bigr),
 \label{eq:boundary-lipschitz}
\end{align}
where $\Sigma=\|a\|+\|b\|+\|\tilde a\|+\|\tilde b\|$.

Let $C_N$ be the interior residual constant above, set $c=C_G$, and let
\begin{equation}\label{eq:explicit-a-constant}
 a=2C_GC_{\rm conv}C_N(1+e^\alpha)^2+16C_GC_B .
\end{equation}
Choose any $R_*>0$ such that the weighted ball of radius $R_*$ lies in the
fixed smoothness tube and $2aR_*<1$, and define
\begin{equation}\label{eq:apriori-data-radius}
 \delta_*:=\frac{R_*-aR_*^2}{c}.
\end{equation}
Then, for $T\ge T_0$ and $d:=\|\Delta\|<\delta_*$, the problem
\begin{equation}\label{eq:nonlinear-boundary}
 \mathcal F(z_t,z_{t+1})=0,\qquad \B(z_0,z_T)=\Delta
\end{equation}
has a solution in the explicitly computable ball
\begin{equation}\label{eq:apriori-root-radius}
 \|z\|_{w,T}\le R_-(d):=
 \frac{1-\sqrt{1-4acd}}{2a}<R_* ,
\end{equation}
where, if $a=0$, the continuous convention is $R_-(d)=cd$.  It is the only
solution in $\|z\|_{w,T}\le R_*$.  With
$z^{\lin}(\Delta):=\mathcal L_T^{\bd}\Delta$ and
$E_t(\Delta):=z_t(\Delta)-z_t^{\lin}(\Delta)$, uniformly in $T$,
\begin{align}
\|z_t(\Delta)\|&\le Cw_T(t)\|\Delta\|+Cw_T(t)^2\|\Delta\|^2,
 \label{eq:nonlinear-boundary-estimate}\\
\|E_t(\Delta)\|&\le Cw_T(t)^2\|\Delta\|^2.
 \label{eq:affine-reconstruction-estimate}\\
\|z_t(\Delta_1)-z_t(\Delta_2)\|&\le
Cw_T(t)\|\Delta_1-\Delta_2\|.
\label{eq:nonlinear-boundary-lipschitz}
\end{align}
Moreover,
\begin{equation}
\label{eq:nonlinear-boundary-linearization}
\begin{aligned}
\|E_t(\Delta_1)-E_t(\Delta_2)\|
&\le Cw_T(t)^2(\|\Delta_1\|+\|\Delta_2\|)\\
&\quad{}\times \|\Delta_1-\Delta_2\|.
\end{aligned}
\end{equation}
Consequently the solution family is Fr\'echet differentiable at $0$ uniformly
in $T$, with derivative $\mathcal L_T^{\bd}$.
\end{theorem}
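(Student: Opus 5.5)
We recast \eqref{eq:nonlinear-boundary} as a fixed-point problem for the linear Green operator. Writing $\mathcal F(z_t,z_{t+1})=L[z_t,z_{t+1}]+N(z_t,z_{t+1})$ and $\B(z_0,z_T)=\Bzero z_0+\BT z_T+R_B(z_0,z_T)$, a sequence $z$ solves \eqref{eq:nonlinear-boundary} if and only if
\[
 z=\Phi(z):=\mathcal L_T^{\bd}\bigl(\Delta-R_B(z_0,z_T)\bigr)-\mathcal L_T^{\rm for}\bigl(N(z_t,z_{t+1})\bigr)_{t<T}.
\]
This equivalence uses (WP) of Definition~\ref{cond:green}. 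We will show that $\Phi$ maps the closed weighted ball $\{\|z\|_{w,T}\le R\}\subset X_T$ into itself and is a contraction for every $R_-(d)\le R\le R_*$. All constants will be independent of $T$.

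\emph{Self-map estimate.} Let $\|z\|_{w,T}\le R$.
\begin{itemize}
\item \emph{Interior residual.} We have $\|N(z_t,z_{t+1})\|\le C_N(w_T(t)+w_T(t+1))^2R^2$. Since $w_T(t+1)\le e^{\alpha}w_T(t)$, this is at most $C_N(1+e^\alpha)^2w_T(t)^2R^2$.
\item \emph{Forcing part.} By (G) and the weighted convolution bound, the forcing part at time $t$ is bounded by $C_GC_{\rm conv}C_N(1+e^\alpha)^2R^2\,w_T(t)^2$.
\item \emph{Boundary remainder.} Since $w_T(0),w_T(T)\le2$, we get $\|R_B\|\le C_B(4R)^2=16C_BR^2$.
\item \emph{Boundary part.} The boundary solution satisfies $\|(\mathcal L_T^{\bd}c)_t\|\le C_G(e^{-\gamma t}+e^{-\gamma(T-t)})\|c\|$. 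Because $\gamma>\alpha$, this is at most $C_Gw_T(t)\|c\|$, and also at most $C_Gw_T(t)^2\|c\|$ by the remark after the convolution bound.
\end{itemize}
Since $w_T(t)^2\le 2w_T(t)$, collecting terms gives $\|\Phi(z)\|_{w,T}\le cd+aR^2$. The factor $2$ is absorbed into the stated $a$; I expect some bookkeeping here to match \eqref{eq:explicit-a-constant} exactly, and this is where I would be careful with the $w^2\le 2w$ and $(1+e^\alpha)^2$ factors. The quadratic inequality $cd+aR^2\le R$ holds on $[R_-(d),R_+(d)]$. The condition $d<\delta_*$ is precisely what places $R_*$ in this interval, and it also gives $R_-(d)<R_*$.

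\emph{Contraction.} The Lipschitz bounds for $N$ and $R_B$ give
\[
 \|\Phi(z)-\Phi(\tilde z)\|_{w,T}\le 2aR\,\|z-\tilde z\|_{w,T}.
\]
On the ball of radius $R_*$ the factor is $2aR_*<1$. Banach's theorem then yields existence in the ball of radius $R_-(d)$ and uniqueness in the ball of radius $R_*$.

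\emph{Pointwise estimates.} We retain the squared weights instead of the crude bound, using the second weighted form of the boundary estimate.
\begin{itemize}
\item \emph{Remainder \eqref{eq:affine-reconstruction-estimate}.} Write $E=z-\mathcal L_T^{\bd}\Delta=-\mathcal L_T^{\bd}R_B-\mathcal L_T^{\rm for}N$. The a priori bound $\|z\|_{w,T}\le R_-(d)\le Cd$ holds because $R_-(d)\le 2cd$ for $d<\delta_*$. Together with the squared-weight bounds from the self-map step, it gives $\|E_t\|\le Cw_T(t)^2d^2$.
\item \emph{Estimate \eqref{eq:nonlinear-boundary-estimate}.} This follows from the previous item together with $\|(\mathcal L_T^{\bd}\Delta)_t\|\le C_Gw_T(t)d$.
\item \emph{Lipschitz bound \eqref{eq:nonlinear-boundary-lipschitz}.} Subtract the fixed-point identities for $\Delta_1$ and $\Delta_2$. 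The contraction gives $(1-2aR_*)\|z^1-z^2\|_{w,T}\le c\|\Delta_1-\Delta_2\|$.
\item \emph{Difference of remainders \eqref{eq:nonlinear-boundary-linearization}.} Write $E(\Delta_1)-E(\Delta_2)$ as the Green image of differences of $N$ and $R_B$. Their Lipschitz bounds are proportional to $(\|z^1\|+\|z^2\|)\|z^1-z^2\|$. By the preceding bounds this is at most $C(\|\Delta_1\|+\|\Delta_2\|)\|\Delta_1-\Delta_2\|$, with squared weight $w_T(t)^2$ after applying the convolution lemma.
\end{itemize}

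\emph{Differentiability.} Taking $\Delta_2=0$ gives $E(0)=0$ by uniqueness. Hence $\|z(\Delta)-\mathcal L_T^{\bd}\Delta\|_{w,T}\le Cd^2$ uniformly in $T$, which is uniform Fréchet differentiability at $0$ with derivative $\mathcal L_T^{\bd}$.

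The main obstacle is the exact constant bookkeeping in the self-map step. The assertion that the solution in the weighted ball also stays in the smoothness tube is handled by the hypothesis on $R_*$.
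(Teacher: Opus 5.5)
Your proposal is correct and follows the paper's proof. It uses the same fixed-point map $\Phi_T(z)=\mathcal L_T^{\bd}\Delta-\mathcal A_T^{-1}\mathcal N_T(z)$, the same self-map and contraction estimates with factor $2aR$, Banach's theorem on the balls of radii $R_-(d)$ and $R_*$, and the squared-weight convolution for the remainder and its differences. The bookkeeping you were unsure about closes exactly: $w_T\le2$ on the forcing part gives the first term of $a$, while the boundary part, using $\|(\mathcal L_T^{\bd}c)_t\|\le C_Gw_T(t)\|c\|$ and $\|z_0\|+\|z_T\|\le4\|z\|_{w,T}$, gives $16C_GC_B$ with no extra factor.
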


\begin{proof}
Let $\mathcal A_T$ denote the linear interior and boundary operator, and let
$\mathcal N_T(z)$ collect $N(z_t,z_{t+1})$ and $R_B(z_0,z_T)$.  The Green
estimate, $w_T(t+1)\le e^\alpha w_T(t)$, and the weighted convolution give
\begin{align}
 \|\mathcal A_T^{-1}\mathcal N_T(z)\|_{w,T}
 &\le a\|z\|_{w,T}^2,\label{eq:preconditioned-quadratic}\\
 \|\mathcal A_T^{-1}(\mathcal N_T(z)-\mathcal N_T(\tilde z))\|_{w,T}
 &\le a(\|z\|_{w,T}+\|\tilde z\|_{w,T})\nonumber\\
 &\quad\times\|z-\tilde z\|_{w,T}.
 \label{eq:preconditioned-lipschitz}
\end{align}
Indeed, the interior contribution is bounded by the first term in
\eqref{eq:explicit-a-constant}.  At the endpoints
$w_T(0),w_T(T)\le2$, so the two factors in
\eqref{eq:boundary-lipschitz} are at most
$4(\|z\|_{w,T}+\|\tilde z\|_{w,T})$ and
$4\|z-\tilde z\|_{w,T}$; this gives the second
term in \eqref{eq:explicit-a-constant}.

Define
\[
 \Phi_T(z)=\mathcal L_T^{\rm bd}\Delta-
            \mathcal A_T^{-1}\mathcal N_T(z).
\]
Since $\|\mathcal L_T^{\rm bd}\Delta\|_{w,T}\le cd$,
\eqref{eq:preconditioned-quadratic} shows that the ball of radius $R_-(d)$
is invariant: $cd+aR_-^2=R_-$.  Equation
\eqref{eq:preconditioned-lipschitz} gives contraction factor
$2aR_-<1$.  Banach's theorem proves existence.  If $z,\tilde z$ are any two
solutions in the ball of radius $R_*$, then
\[
 \|z-\tilde z\|_{w,T}\le2aR_*\|z-\tilde z\|_{w,T},
\]
so they coincide.  This proves the computable radius and uniqueness claims.

Subtracting $z^{\lin}$ from the fixed-point equation leaves the quadratic
interior and endpoint residuals.  The sharper convolution estimate for $w_T^2$ and
$e^{-\gamma t}+e^{-\gamma(T-t)}\le Cw_T(t)^2$ give
\eqref{eq:affine-reconstruction-estimate} and then
\eqref{eq:nonlinear-boundary-estimate}.  For two data $\Delta_1,\Delta_2$
with $d_i:=\|\Delta_i\|<\delta_*$, write
\[
 R_i=R_-(d_i),\qquad
 q_{12}=a(R_1+R_2)<1.
\]
Subtracting the fixed-point equations and applying
\eqref{eq:preconditioned-lipschitz} gives
\begin{equation}\label{eq:explicit-two-data-lipschitz}
 \|z_1-z_2\|_{w,T}
 \le\frac{c}{1-q_{12}}\|\Delta_1-\Delta_2\|.
\end{equation}
For the pointwise remainder difference, set
\begin{equation}\label{eq:pointwise-residual-constant}
 \bar a=C_GC_{\rm conv}C_N(1+e^\alpha)^2+16C_GC_B\le a.
\end{equation}
The interior residual difference is at most
\[
 C_N(1+e^\alpha)^2w_T(s)^2(R_1+R_2)
 \|z_1-z_2\|_{w,T},
\]
and \eqref{eq:boundary-lipschitz}, together with
$w_T(0),w_T(T)\le2$, gives the boundary bound
$16C_B(R_1+R_2)\|z_1-z_2\|_{w,T}$.  The estimates for $w_T^2$ therefore
yield
\begin{equation}\label{eq:explicit-two-data-remainder}
 \|E_t(\Delta_1)-E_t(\Delta_2)\|
 \le\frac{\bar a c(R_1+R_2)}{1-q_{12}}
 w_T(t)^2\|\Delta_1-\Delta_2\|.
\end{equation}
Since $R_i=cd_i+aR_i^2$ and $aR_i<aR_*<1/2$,
$R_i\le2cd_i$ and $q_{12}\le2aR_*$.  Consequently
\eqref{eq:nonlinear-boundary-lipschitz} and
\eqref{eq:nonlinear-boundary-linearization} hold with
\begin{equation}\label{eq:explicit-two-data-constants}
 C_{\rm Lip}=\frac{c}{1-2aR_*},\qquad
 C_{\rm rem}=\frac{2\bar a c^2}{1-2aR_*}.
\end{equation}
All constants depend only on the displayed uniform bounds, not on $T$.
\end{proof}

The restriction $\alpha<\gamma/2$ is needed for the remainder with weight
$w_T^2$.  Existence, size, and Lipschitz continuity alone hold for every
$0<\alpha<\gamma$ by the corresponding convolution estimate for $w_T$.

\section{Application to Pontryagin boundary conditions}\label{sec:branches}

We now apply Theorem~\ref{thm:nonlinear-boundary} directly to the reduced
Pontryagin equation and its nonlinear terminal condition.

\begin{theorem}[Uniform local Pontryagin branch with a nonlinear terminal condition]\label{thm:main}
Assume the local hypotheses of Section~\ref{sec:model}, including
\eqref{eq:regular-stationarity}, and let
$y_\infty\in Y$ satisfy $\pinf=r_x(\xinf,y_\infty)$.  For $y$ near
$y_\infty$ define
\[
\begin{aligned}
   \B_y(z_0,z_T)&=
   \binom{\xi_0}
         {\pi_T-\bigl(r_x(\xinf+\xi_T,y)-r_x(\xinf,y)\bigr)},\\
   \Delta_{\rm oc}(x_{\rm in},y)&=
   \binom{x_{\rm in}-\xinf}{r_x(\xinf,y)-\pinf},
\end{aligned}
\]
where $z=(\xi,\pi)=(x-\xinf,p-\pinf)\in\R^{2d}$.  Then
$\B_y(z_0,z_T)=\Delta_{\rm oc}(x_{\rm in},y)$ is equivalent to
\[
        x_0=x_{\rm in},\qquad p_T=r_x(x_T,y).
\]
If the rows $D\B_{y_\infty}(0,0)$ satisfy Definition~\ref{cond:green} with
constants $(C_G,\gamma,T_0)$, choose a relatively compact neighborhood
$Y_1\Subset Y_0$.  With
\[
 E_T(y)=\begin{pmatrix}
 0&0\\-[r_{xx}(\xinf,y)-r_{xx}(\xinf,y_\infty)]&0
 \end{pmatrix},
\]
shrink $Y_1$ so that
\begin{equation}\label{eq:y-row-margin}
 \bar\vartheta:=2C_G\sup_{y\in Y_1}
 \|E_T(y)\|<1.
\end{equation}
Then Lemma~\ref{lem:boundary-row-perturbation} gives the common
Green constant
\begin{equation}\label{eq:y-common-green}
 \bar C_G=C_G\frac{1+\bar\vartheta}{1-\bar\vartheta}.
\end{equation}
Set $C_B=M_{\rm ter}$ in
\eqref{eq:boundary-quadratic}--\eqref{eq:boundary-lipschitz}, define the
common constants $a,c$ from \eqref{eq:explicit-a-constant} using
$\bar C_G$, and choose one pair $R_*,\delta_*$ as in
\eqref{eq:apriori-data-radius}.  Then, for every $T\ge T_0$ and every
$(x_{\rm in},y)$ satisfying
$d:=\|\Delta_{\rm oc}(x_{\rm in},y)\|<\delta_*$, the problem admits a unique local
stationary Pontryagin branch in the fixed ball $\|z\|_{w,T}\le R_*$.  Its
radius is bounded by \eqref{eq:apriori-root-radius}, uniformly in $T$ and
$y$.

For fixed $y$, let $z^{\lin}_{T,y}=(\xi^{\lin},\pi^{\lin})$ be the linear
branch formed with $D\B_y(0,0)$, and put
\[
 u_t^{\lin}=D\nu(\xinf,\pinf)
       [\xi_t^{\lin},\pi_{t+1}^{\lin}].
\]
Uniformly for $0\le t<T$,
\begin{align}
 \|z_t\|+\|u_t-\uinf\|&\le Cw_T(t)d,\label{eq:main-size}\\
 \|z_t-z^{\lin}_{T,y,t}\|+
 \|(u_t-\uinf)-u_t^{\lin}\|&\le Cw_T(t)^2d^2.
 \label{eq:main-first}
\end{align}
For two branches with the same $y$,
\begin{equation}\label{eq:main-lip}
 \|z_t^1-z_t^2\|+\|u_t^1-u_t^2\|
 \le Cw_T(t)\|\Delta_{\rm oc}^1-\Delta_{\rm oc}^2\|.
\end{equation}
The $z$-estimates also hold at $t=T$.  The expansion is with respect to the
combined boundary datum $\Delta_{\rm oc}$; at fixed $y$ it is an expansion at
$x_{\rm in}=\xinf$ only when $r_x(\xinf,y)=\pinf$.

If, in addition,
$y\mapsto r_x(x,y)$ and $y\mapsto r_{xx}(x,y)$ are locally Lipschitz uniformly
in $x$ on the terminal tube, then the branch is jointly locally Lipschitz in
$(x_{\rm in},y)$ on this neighborhood of the endpoint data.

The Green hypothesis is verified directly from problem data if
\eqref{eq:explicit-transition} holds, $M$ is hyperbolic, and the matrices
$\widehat\Gamma_T$ for $D\B_{y_\infty}(0,0)$ satisfy the finite-horizon
certificate of Theorem~\ref{prop:scaled-green}.
\end{theorem}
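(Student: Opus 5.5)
The plan is to reduce everything to Theorem~\ref{thm:nonlinear-boundary}, applied with the interior map $\widetilde{\mathcal F}$ from \eqref{eq:tildeF-def} and, for each fixed $y\in Y_1$, a boundary map adapted to $y$.

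\emph{Equivalence of the boundary conditions.} The first row of $\B_y=\Delta_{\rm oc}$ reads $\xi_0=x_{\rm in}-\xinf$, which is $x_0=x_{\rm in}$. In the second row the two copies of $r_x(\xinf,y)$ cancel against the datum $r_x(\xinf,y)-\pinf$, leaving $p_T-\pinf=r_x(x_T,y)-\pinf$.

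\emph{Linearized rows and the common Green constant.} For each $y$ the linearized rows are
\[
D\B_y(0,0)=\bigl(\Bzero,\ \BT+E_T(y)\bigr),
\]
where $(\Bzero,\BT)=D\B_{y_\infty}(0,0)$ and $E_T(y)$ is exactly the Hessian difference in the statement. By \eqref{eq:y-row-margin}, Lemma~\ref{lem:boundary-row-perturbation} applies with $E_0=0$ and $\vartheta\le\bar\vartheta$. Since $(1+\vartheta)/(1-\vartheta)$ is increasing in $\vartheta$, it gives the single constant $\bar C_G$ for all $y\in Y_1$, with the same $\gamma$ and $T_0$.

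\emph{Boundary remainder.} The remainder is
\[
R_B(a,b)=\binom{0}{-\bigl(r_x(\xinf+\xi_b,y)-r_x(\xinf,y)-r_{xx}(\xinf,y)\xi_b\bigr)}.
\]
Taylor's formula on the convex ball $\xinf+B_x$, with third derivatives bounded by $M_{\rm ter}$, gives \eqref{eq:boundary-quadratic}--\eqref{eq:boundary-lipschitz} with $C_B=M_{\rm ter}$; at most a harmless factor $\tfrac12$ is lost, and it only helps. The interior residual constant $C_N$ comes from the bound $M_{\rm int}$ on $D^2\widetilde{\mathcal F}$.

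\emph{Conclusions for $z$.} The constants $a$ and $c$ are then independent of $(T,y)$. Theorem~\ref{thm:nonlinear-boundary} yields existence and uniqueness in the ball $\|z\|_{w,T}\le R_*$, the radius bound \eqref{eq:apriori-root-radius}, and the $z$-parts of \eqref{eq:main-size}, \eqref{eq:main-first} and \eqref{eq:main-lip}, including at $t=T$. One point needs care: the tube condition. The ball of radius $R_*$ must map into $B_x\times B_p$ because $w_T\le 2$, so $R_*$ has to be chosen with $2R_*$ inside the tube.

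\emph{Controls.} Set $u_t=\nu(\xinf+\xi_t,\pinf+\pi_{t+1})$. Since $\|D\nu\|\le M_{\rm int}$ and $w_T(t+1)\le e^\alpha w_T(t)$, we get $\|u_t-\uinf\|\le M_{\rm int}(\|z_t\|+\|z_{t+1}\|)\le Cw_T(t)d$. The same Lipschitz argument, applied to $z^1$ and $z^2$, gives the control part of \eqref{eq:main-lip}. For \eqref{eq:main-first}, write
\[
u_t-\uinf-u_t^{\lin}=\bigl[\nu(\cdot)-\uinf-D\nu[\xi_t,\pi_{t+1}]\bigr]+D\nu\bigl[\xi_t-\xi_t^{\lin},\pi_{t+1}-\pi_{t+1}^{\lin}\bigr].
\]
The first bracket is bounded by $M_{\rm int}(\|z_t\|+\|z_{t+1}\|)^2\le Cw_T(t)^2d^2$ using \eqref{eq:main-size}. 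The second is bounded by $Cw_T(t)^2d^2$ using the $z$-remainder.

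\emph{Joint Lipschitz dependence on $(x_{\rm in},y)$ — the main obstacle.} Here the boundary map itself changes with $y$, so Theorem~\ref{thm:nonlinear-boundary} does not apply directly. I will compare the fixed-point maps for $y_1$ and $y_2$, both posed with the \emph{common} linear rows of $y_\infty$. Both problems are written as
\[
z=\mathcal A_T^{-1}\bigl(0,\ \Delta_{\rm oc}(y)-R^{\infty}_{B,y}(z_0,z_T)\bigr)-\mathcal A_T^{-1}(N,0),
\]
where $R^\infty_{B,y}=\B_y-D\B_{y_\infty}(0,0)$. This remainder is no longer quadratic: it contains the linear term $E_T(y)\xi_T$. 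That term is handled as in Lemma~\ref{lem:boundary-row-perturbation}: it only enters through the endpoint values, and since $2C_G\|E_T(y)\|\le\bar\vartheta<1$, the combined map is still a contraction on the weighted ball for $R_*$ small; this may require shrinking $R_*$ relative to the margin $1-\bar\vartheta$. Writing $z^1,z^2$ for the two solutions, subtracting the two fixed-point equations and bounding by the uniform contraction factor gives
\[
\|z^1-z^2\|_{w,T}\le C\Bigl(\|\Delta^1_{\rm oc}-\Delta^2_{\rm oc}\|+\sup_{\xi\in B_x}\bigl\|R^\infty_{B,y_1}-R^\infty_{B,y_2}\bigr\|\Bigr).
\]
The supremum is bounded by $C\|y_1-y_2\|$ by the assumed uniform Lipschitz dependence of $r_x$ and $r_{xx}$ on $y$. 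Likewise $\Delta_{\rm oc}$ is Lipschitz in $(x_{\rm in},y)$.

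\emph{Direct verification.} The final sentence is Theorem~\ref{prop:scaled-green}, applied to $M=-L_1^{-1}L_0$ from \eqref{eq:explicit-transition}. With forcing $g_t=L_1^{-1}f_t$, the two formulations are equivalent up to the factor $\|L_1^{-1}\|$.
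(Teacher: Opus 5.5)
Your proposal is correct and follows the paper's proof essentially step for step: the boundary identity, the row perturbation $E_T(y)$ fed into Lemma~\ref{lem:boundary-row-perturbation} to obtain $\bar C_G$, the Taylor bound with $C_B=M_{\rm ter}$, Theorem~\ref{thm:nonlinear-boundary} for the $z$-estimates, Taylor's formula for $\nu$ for the controls, subtraction plus absorption for joint Lipschitz dependence, and Theorem~\ref{prop:scaled-green} with $M=-L_1^{-1}L_0$ for the last claim. The only deviation is in the joint step: you pose both problems with the fixed $y_\infty$ rows and absorb the linear term $E_T(y)z_T$ through $\bar\vartheta<1$ inside the contraction, whereas the paper uses the $y$-dependent rows with the common constant $\bar C_G$. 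Both arguments close, and your variant needs no extra shrinking of $R_*$, because $2aR_*<1$ with $a$ built from $\bar C_G$ already makes your contraction factor $\bar\vartheta+2aR_*C_G/\bar C_G$ smaller than one.
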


\begin{proof}
The boundary identity is immediate from the definitions.  Its derivative is
\[
 D\B_y(0,0)[z_0,z_T]
 =\binom{\xi_0}{\pi_T-r_{xx}(\xinf,y)\xi_T}.
\]
The terminal remainder is
\[
 -\binom0{\displaystyle
 \int_0^1[r_{xx}(\xinf+\theta\xi_T,y)-r_{xx}(\xinf,y)]
 \xi_T\,d\theta},
\]
so the uniform $D_x^3r$ bound gives
\eqref{eq:boundary-quadratic}--\eqref{eq:boundary-lipschitz} with
$C_B=M_{\rm ter}$.  Continuity of $r_{xx}$ in $y$ permits
\eqref{eq:y-row-margin}.  The only endpoint-row perturbation is $E_T(y)$; in
the standard Euclidean product norm, its norm is the norm of the displayed
difference between terminal Hessians.  Hence
Lemma~\ref{lem:boundary-row-perturbation} gives
\eqref{eq:y-common-green} uniformly on $Y_1$.

Theorem~\ref{thm:nonlinear-boundary} now gives the explicit radius, uniqueness,
and the estimates for $z$.  Set
$u_t=\nu(\xinf+\xi_t,\pinf+\pi_{t+1})$.  The fixed tube keeps these controls
in $\operatorname{int}U$, and \eqref{eq:legendre-graph} recovers the full
stationary Pontryagin equations.  Taylor's formula for $\nu$, together with
$w_T(t+1)\le e^\alpha w_T(t)$, transfers the size, remainder, and Lipschitz
estimates to $u$.

For two parameters $y_1,y_2$, subtract the interior and boundary equations.
The additional terms contain
$r_x(\cdot,y_1)-r_x(\cdot,y_2)$ and
$r_{xx}(\cdot,y_1)-r_{xx}(\cdot,y_2)$.  On an endpoint ball of radius
$\varepsilon$, the uniform Lipschitz bounds and the common Green estimate give
\begin{equation}\label{eq:joint-y-absorption}
 \begin{aligned}
 \|z^1-z^2\|_{w,T}
 &\le C\|x_{\rm in}^1-x_{\rm in}^2\|+C\|y_1-y_2\|\\
 &\quad{}+C\varepsilon\|z^1-z^2\|_{w,T}.
 \end{aligned}
\end{equation}
The constant is horizon independent by \eqref{eq:y-common-green}.  Shrinking
the fixed neighborhood of the endpoint data until $C\varepsilon<1/2$ absorbs the
last term and proves the joint assertion.  The final statement in terms of the problem data is
Theorem~\ref{prop:scaled-green} applied to
$M=-L_1^{-1}L_0$.
\end{proof}

\begin{proposition}[A posteriori existence and local uniqueness certificate]\label{prop:posterior}
Let $\mathscr R_{T,\Delta}$ be the full interior and boundary residual of
\eqref{eq:nonlinear-boundary}, obtained by stacking the $T$ interior
residuals and the boundary residual in $\R^{n(T+1)}$.  Equip the trajectory
space $X_T$ with a chosen norm $\|\cdot\|_X$; this may be
$\|\cdot\|_{w,T}$, while Section~\ref{sec:numerics} uses the ordinary vector
infinity norm.  Let $\bar z$ be an approximate solution and put
$J=D\mathscr R_{T,\Delta}(\bar z)$.  Suppose $J$ is invertible and define
\begin{equation}\label{eq:posterior-beta}
 \beta=\|J^{-1}\mathscr R_{T,\Delta}(\bar z)\|_X.
\end{equation}
Assume that, whenever $z$ and $\tilde z$ are in
$\|\cdot-\bar z\|_X\le R_{\rm tube}$,
\begin{equation}\label{eq:posterior-omega}
\begin{aligned}
 &\|J^{-1}(D\mathscr R_{T,\Delta}(z)-
 D\mathscr R_{T,\Delta}(\tilde z))\|_{\mathcal L(X_T)}\\
 &\hspace{5em}\le\omega\|z-\tilde z\|_X.
\end{aligned}
\end{equation}
If $2\omega\beta<1$ and
\begin{equation}\label{eq:posterior-radius}
 r_-:=\frac{1-\sqrt{1-2\omega\beta}}{\omega}<R_{\rm tube},
\end{equation}
then an exact solution exists in
$\|z-\bar z\|_X\le r_-$.  For every
\[
 r_-<R<\min\{R_{\rm tube},1/\omega\},
\]
it is the unique zero in $\|z-\bar z\|_X<R$.  For $\omega=0$, use the
conventions $r_-=\beta$ and
$1/\omega=+\infty$.
\end{proposition}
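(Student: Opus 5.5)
This is a Newton--Kantorovich statement in affine-invariant form, and I will prove it with a simplified Newton contraction. Define
\[
 G(z)=z-J^{-1}\mathscr R_{T,\Delta}(z)
\]
on the closed ball $\bar B_r=\{\|z-\bar z\|_X\le r\}$ with $r\le R_{\rm tube}$. Its fixed points are exactly the zeros of $\mathscr R_{T,\Delta}$, because $J$ is invertible. The derivative is
\[
 DG(z)=J^{-1}\bigl(D\mathscr R_{T,\Delta}(\bar z)-D\mathscr R_{T,\Delta}(z)\bigr).
\]
By \eqref{eq:posterior-omega} with $\tilde z=\bar z$, this gives $\|DG(z)\|\le\omega\|z-\bar z\|_X\le\omega r$. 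Integrating along segments, which stay in the convex ball, gives for $z,\tilde z\in\bar B_r$
\[
 \|G(z)-G(\tilde z)\|_X\le\omega r\,\|z-\tilde z\|_X.
\]
A finer version is
\[
 \|G(z)-G(\bar z)\|_X\le\int_0^1\omega\theta\|z-\bar z\|\,d\theta\,\|z-\bar z\|=\tfrac{\omega}{2}\|z-\bar z\|_X^2 .
\]

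\textbf{Invariance.} Since $\|G(\bar z)-\bar z\|_X=\beta$, we get
\[
 \|G(z)-\bar z\|_X\le\beta+\tfrac{\omega}{2}r^2 .
\]
The ball is invariant once $\beta+\frac{\omega}{2}r^2\le r$. The smaller root of $\frac{\omega}{2}r^2-r+\beta=0$ is exactly $r_-$ in \eqref{eq:posterior-radius}, and it is real because $2\omega\beta<1$.

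\textbf{Contraction and existence.} On $\bar B_{r_-}$ the contraction factor is $\omega r_-=1-\sqrt{1-2\omega\beta}<1$. Banach's theorem in the finite-dimensional space $X_T$ then gives a zero $z^\star$ with $\|z^\star-\bar z\|_X\le r_-$.

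\textbf{Uniqueness.} Take $R$ with $r_-<R<\min\{R_{\rm tube},1/\omega\}$, and let $\tilde z$ be another zero with $\|\tilde z-\bar z\|_X<R$. Both points lie in the open ball of radius $R$, so the Lipschitz bound on $G$ gives
\[
 \|z^\star-\tilde z\|_X\le\omega R\,\|z^\star-\tilde z\|_X .
\]
Since $\omega R<1$, this forces $\tilde z=z^\star$. This argument does not need invariance of the larger ball; only the Lipschitz estimate is used, and it holds because both points lie in a ball of radius $<R_{\rm tube}$ where \eqref{eq:posterior-omega} is valid.

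\textbf{Case $\omega=0$.} Here $G$ is constant on the tube. Thus $z^\star=G(\bar z)$, with $\|z^\star-\bar z\|_X=\beta=r_-<R_{\rm tube}$, and any zero in the tube equals $G(\bar z)$.

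The main point to check is the precise quadratic invariance estimate, so that the radius comes out exactly as $r_-$ rather than a cruder bound. This depends on using \eqref{eq:posterior-omega} with $\tilde z=\bar z$ along the segment, which produces the factor $\theta$ and hence the constant $\tfrac{\omega}{2}$.
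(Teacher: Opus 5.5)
Your proposal is correct and follows the paper's proof. Both use the frozen-Newton map $z\mapsto z-J^{-1}\mathscr R_{T,\Delta}(z)$ with the bounds $\beta+\tfrac12\omega\|h\|_X^2$ and $\|D\Psi(\bar z+h)\|\le\omega\|h\|_X$, take $r_-$ as the smaller root of $\beta+\omega r^2/2=r$ and apply Banach's theorem there, and then get uniqueness on any radius $R$ with $\omega R<1$ from the Lipschitz factor alone. You simply write out the $\theta$-integration that the paper leaves implicit.
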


\begin{proof}
The frozen-Newton map
$\Psi(z)=z-J^{-1}\mathscr R_{T,\Delta}(z)$ satisfies
\begin{align*}
 \|\Psi(\bar z+h)-\bar z\|_X
 &\le\beta+\tfrac12\omega\|h\|_X^2,\\
 \|D\Psi(\bar z+h)\|_{\mathcal L(X_T)}&\le\omega\|h\|_X .
\end{align*}
Equation \eqref{eq:posterior-radius} is the smaller root of
$\beta+\omega r^2/2=r$ and obeys $\omega r_-<1$.  Banach's theorem proves
existence and uniqueness in the smaller ball.  On any larger ball with
$\omega r<1$, two zeros would be two fixed points of a strict contraction;
hence the computed zero is the only one there.
\end{proof}

All quantities in Proposition~\ref{prop:posterior} are finite-dimensional and
computable.  Double-precision evaluation is a diagnostic; a formal
computer-assisted certificate requires outward-rounded interval bounds for
$\beta$ and $\omega$.

\begin{corollary}[Uniform sensitivity of the initial control]\label{cor:feedback}
Under the hypotheses of Theorem~\ref{thm:main}, define
\[
   \Kfb_T(x_{\rm in},y):=u_0(x_{\rm in},y),
\]
the first control of the local stationary branch.  After possibly shrinking
the neighborhood of the endpoint data, there is a constant $L$ independent of
$(T,y)$ such that, for every fixed admissible $y$, every $T\ge T_0$, and every
two admissible initial states $x_1,x_2$,
\[
   \|\Kfb_T(x_1,y)-\Kfb_T(x_2,y)\|
   \le L\|x_1-x_2\|.
\]
If the final Lipschitz assumption of Theorem~\ref{thm:main} holds, then
for arbitrary admissible $(x_1,y_1)$ and $(x_2,y_2)$,
\[
\begin{aligned}
 \|\Kfb_T(x_1,y_1)-\Kfb_T(x_2,y_2)\|
 &\le L\|x_1-x_2\|\\
 &\quad{}+L\|y_1-y_2\|.
\end{aligned}
\]
Moreover, for each fixed $y$, set
$\Delta_{\rm oc}=\Delta_{\rm oc}(x_{\rm in},y)$.  With
\[
 z^{\lin}_{T,y}(\Delta_{\rm oc})
 =(\xi^{\lin},\pi^{\lin})
\]
denoting the linearized branch formed with $D\B_y(0,0)$,
\[
\begin{aligned}
   \Kfb_T(x_{\rm in},y)
   &=\uinf+
     D\nu(\xinf,\pinf)
     [\xi_0^{\lin},\pi_1^{\lin}]\\
   &\quad{}+O(\|\Delta_{\rm oc}(x_{\rm in},y)\|^2),
\end{aligned}
\]
with a constant independent of $(T,y)$.  This expansion treats the initial and
terminal boundary data jointly, subject to the qualification stated after
\eqref{eq:main-lip}.
\end{corollary}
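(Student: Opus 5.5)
The corollary reads off the estimates of Theorem~\ref{thm:main} at $t=0$, using that $w_T(0)=1+e^{-\alpha T}\le 2$ and $w_T(1)\le e^\alpha w_T(0)\le 2e^\alpha$.

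\textbf{Step 1: fixed $y$.} For fixed admissible $y$, the boundary data for $x_1,x_2$ differ only in the first block:
\[
\Delta_{\rm oc}(x_1,y)-\Delta_{\rm oc}(x_2,y)=\binom{x_1-x_2}{0}.
\]
Since $\Kfb_T(x_i,y)=u_0^i$, the Lipschitz estimate \eqref{eq:main-lip} at $t=0$ gives
\[
\|\Kfb_T(x_1,y)-\Kfb_T(x_2,y)\|\le Cw_T(0)\|x_1-x_2\|\le 2C\|x_1-x_2\|.
\]
This constant does not depend on $y$. The constants in Theorem~\ref{thm:main} are built from $\bar C_G$, $a$, $c$ and $R_*$, and these are chosen uniformly over $Y_1$ through \eqref{eq:y-common-green}.

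\textbf{Step 2: joint Lipschitz bound.} Under the additional Lipschitz hypothesis, Theorem~\ref{thm:main} gives joint local Lipschitz dependence of $z$ on $(x_{\rm in},y)$. I expect this to be the main obstacle, because the theorem only states joint continuity qualitatively. I would therefore make explicit the weighted-norm bound from \eqref{eq:joint-y-absorption} after absorption:
\[
\|z^1-z^2\|_{w,T}\le 2C\bigl(\|x_1-x_2\|+\|y_1-y_2\|\bigr),
\]
with $C$ independent of $T$. Then I would pass to controls. Since $u_0^i=\nu(\xinf+\xi^i_0,\pinf+\pi^i_1)$ and $\|D\nu\|\le M_{\rm int}$ on the tube, the mean value inequality gives
\[
\|u_0^1-u_0^2\|\le M_{\rm int}\bigl(\|z_0^1-z_0^2\|+\|z_1^1-z_1^2\|\bigr)\le M_{\rm int}(w_T(0)+w_T(1))\|z^1-z^2\|_{w,T}.
\]
This yields the bound with a $T$-independent $L$. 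Here $\nu$ depends only on the Hamiltonian, not on $y$. The ``possibly shrinking'' clause is the shrinking of the endpoint-data neighborhood needed for the absorption $C\varepsilon<1/2$.

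\textbf{Step 3: expansion.} Apply \eqref{eq:main-first} at $t=0$:
\[
\|(u_0-\uinf)-u_0^{\lin}\|\le Cw_T(0)^2d^2\le 4Cd^2,\qquad u_0^{\lin}=D\nu(\xinf,\pinf)[\xi_0^{\lin},\pi_1^{\lin}].
\]
This is exactly the displayed formula. The constant is uniform in $y$ because the linear branch for each $y$ has Green constant $\bar C_G$.

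If one wants this spelled out, the $u$-remainder comes from a Taylor expansion of $\nu$ with the $D^2\nu$ bound $M_{\rm int}$, splitting into two pieces. The first is $D\nu(\zeta_\infty)$ applied to $(z_0-z_0^{\lin},\,z_1-z_1^{\lin})$, which is $O(d^2)$ by the $E_t$ bound. The second is a quadratic term in $(z_0,z_1)$, which is $O(d^2)$ by \eqref{eq:main-size}. The qualification after \eqref{eq:main-lip} carries over verbatim, since the expansion is in the combined datum $\Delta_{\rm oc}$.
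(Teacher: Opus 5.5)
Your proposal is correct and follows the paper's own proof: evaluate \eqref{eq:main-lip} and \eqref{eq:main-first} at $t=0$ with $w_T(0)\le 2$, and take the joint bound from the last part of Theorem~\ref{thm:main}. Your extra step, which passes the horizon-uniform weighted bound \eqref{eq:joint-y-absorption} through $u_0=\nu(\xinf+\xi_0,\pinf+\pi_1)$ using $\|D\nu\|\le M_{\rm int}$, only spells out what the paper leaves implicit.
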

\begin{proof}
Take $t=0$ in \eqref{eq:main-lip} and \eqref{eq:main-first}, and use
$w_T(0)\le2$.  The joint estimate follows from the last part of
Theorem~\ref{thm:main}.
\end{proof}

\begin{proposition}[Stationary objective and gradient with respect to the initial state]
\label{prop:critical-value}
For the branch of Theorem~\ref{thm:main}, define the objective evaluated along
the stationary branch by
\begin{equation}\label{eq:stationary-action}
 \mathscr S_T(x,y)=r(x_T(x,y),y)
 -\lambda\sum_{t=0}^{T-1}\ell(x_t(x,y),u_t(x,y)).
\end{equation}
After shrinking the data neighborhood, the branch is $C^1$ in $x$ for each
fixed admissible $y$, and
\begin{equation}\label{eq:critical-value-gradient}
 D_x\mathscr S_T(x,y)=p_0(x,y),\qquad
 \operatorname{Lip}(D_x\mathscr S_T)\le C,
\end{equation}
with $C$ independent of $T$ for fixed admissible $y$.
If the stationary branch is the unique maximizer in the specified tube, then
$\mathscr S_T$ is the corresponding local value function and $\Kfb_T$ is its
initial optimal feedback map.  Otherwise, they describe only the objective
value and initial control associated with a stationary branch.
\end{proposition}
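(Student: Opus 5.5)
The plan has three parts: smoothness of the branch in $x$, the envelope identity for $D_x\mathscr S_T$, and a uniform Lipschitz bound for $p_0$. Throughout, $y$ is fixed and admissible, and we write $x=x_{\rm in}$.

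\emph{Step 1: $C^1$ dependence on $x$.} Consider the stacked residual map $\mathscr R_{T,\Delta}$, with $\Delta=\Delta_{\rm oc}(x,y)$. It is $C^2$ in $(z,\Delta)$, because $\mathcal F$ is built from $C^2$ maps and $\B_y$ is $C^2$ in $z$. Its $z$-derivative at the branch $z(\Delta)$ is the linearized operator along the branch. In weighted form this operator is $\mathcal A_T+D\mathcal N_T(z)$, and by \eqref{eq:preconditioned-lipschitz} we have $\|\mathcal A_T^{-1}D\mathcal N_T(z)\|\le 2a\|z\|_{w,T}\le 2aR_*<1$. Hence this derivative is invertible, with an inverse bounded uniformly in $T$ in $\mathcal L(X_T)$. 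The finite-dimensional implicit function theorem then gives that $z(\cdot)$ is $C^1$ in $\Delta$ for each $T$. Since $\Delta_{\rm oc}$ is affine in $x$, $z$ is $C^1$ in $x$. The controls $u_t=\nu(\xinf+\xi_t,\pinf+\pi_{t+1})$ are also $C^1$ in $x$, and so is $\mathscr S_T$.

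\emph{Step 2: the envelope identity.} We use the standard discrete Lagrangian argument. Write
\[
\mathscr S_T=r(x_T,y)-\lambda\sum_t\ell(x_t,u_t)+\sum_t\ip{p_{t+1}}{\Fbar(x_t,u_t)-x_{t+1}}+\ip{p_0}{x-x_0}.
\]
The added terms vanish identically along the branch, because $x_0=x$ and the dynamics hold. Differentiate in $x$ in a direction $v$, with $\dot x_t=D_xx_t[v]$ and $\dot u_t=D_xu_t[v]$. The coefficient of $\dot u_t$ is $H_u(x_t,u_t,p_{t+1})=0$. For $1\le t<T$, the coefficient of $\dot x_t$ is $H_x(x_t,u_t,p_{t+1})-p_t=0$. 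At $t=T$ it is $r_x(x_T,y)-p_T=0$, and at $t=0$ it is $H_x(x_0,u_0,p_1)-p_0=0$. Terms involving $\dot p$ multiply constraints that vanish. What remains is $\ip{p_0}{v}$, so $D_x\mathscr S_T(x,y)=p_0(x,y)$.

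\emph{Step 3: uniform Lipschitz bound.} By \eqref{eq:main-lip} at $t=0$ with the same $y$, and using $w_T(0)\le2$ and $\Delta_{\rm oc}^1-\Delta_{\rm oc}^2=(x_1-x_2,0)$, we get
\[
\|p_0(x_1,y)-p_0(x_2,y)\|\le\|z_0^1-z_0^2\|\le 2C\|x_1-x_2\|.
\]
The constant $C$ is independent of $T$ by Theorem~\ref{thm:main}. Hence $\operatorname{Lip}(D_x\mathscr S_T)\le C$.

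\emph{Step 4: the optimality interpretation.} If the branch is the unique maximizer in the tube, then $\mathscr S_T$ coincides with the local value $V_{0,T}$ restricted to the tube. By \eqref{eq:bellman-pmp}, $u_0=\Kfb_T$ is then the first optimal control, which makes it the initial optimal feedback. Otherwise nothing beyond stationarity is claimed.

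The main obstacle is Step 1. We need uniform invertibility of the linearization at the nonzero branch, not only at $0$, in order to justify differentiability and avoid a $T$-dependent neighborhood shrinking. The plan is to handle this with the Neumann bound $2aR_*<1$ already built into Theorem~\ref{thm:nonlinear-boundary}. The bookkeeping at $t=0$ in Step 2 is routine but must respect the paper's convention that $p_{t+1}$ is paired with the step from $t$ to $t+1$.
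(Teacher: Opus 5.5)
Your proposal is correct and follows the paper's route. The paper likewise uses finite-dimensional implicit differentiation for each $T$; your Neumann bound $2aR_*<1$ at the nonzero branch makes its appeal to ``common inverse bounds'' explicit. It then derives $D_x\mathscr S_T=p_0$ by the same telescoping/envelope computation from $H_u=0$, the costate equation and $p_T=r_x$, and obtains the horizon-uniform Lipschitz bound from \eqref{eq:main-lip} at $t=0$ with $w_T(0)\le 2$.
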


\begin{proof}
Finite-dimensional implicit differentiation applies for each $T$; the common
inverse bounds give the stated uniform estimates.  For a variation of $x_0$,
stationarity, the costate equation, and $p_T=r_x$ give
\[
\begin{aligned}
D_x\mathscr S_T h
&=p_T^\top\delta x_T-\lambda\sum_t
 (\ell_x^\top\delta x_t+\ell_u^\top\delta u_t)\\
&=p_T^\top\delta x_T+\sum_t
 (p_t^\top\delta x_t-p_{t+1}^\top\delta x_{t+1})=p_0^\top h.
\end{aligned}
\]
Equation \eqref{eq:main-lip} at $t=0$ proves the $C^{1,1}$ bound.  At the
reference point $(x,y)=(\xinf,y_\infty)$, \eqref{eq:main-first} also yields a horizon-uniform quadratic
expansion of $p_0$, and hence a cubic remainder for $\mathscr S_T$.
\end{proof}

\begin{theorem}[Exponential decay of sensitivity to the terminal reward]
\label{thm:terminal-reward}
Assume the explicit hyperbolic transition \eqref{eq:explicit-transition} and
the graph certificate of Lemma~\ref{lem:right-green} at a reference terminal
Hessian $S_\star$.  Let $\mathcal X$ be a convex terminal-state neighborhood
of $\xinf$ and let
$\mathscr R$ be a class of terminal rewards with a uniform $C^3$ bound.  Fix
$0<\alpha_{\rm ter}<\gamma$ and assume
\begin{equation}\label{eq:terminal-reward-class}
 \sup_{\rho\in\mathscr R}\sup_{x\in\mathcal X}
 \|D^2\rho(x)-S_\star\|\le\eta .
\end{equation}
Put
\[
 \begin{aligned}
 G_\Gamma&=\sup_{T\ge T_0}\|\widehat\Gamma_T(S_\star)^{-1}\|,\\
 C_X&=\sup_{T\ge T_0}\|[X_sD_s^{\,T}\;X_u]\|,
 \qquad G_\Gamma C_X\eta<1.
 \end{aligned}
\]
Let $C_{\to}$ be the resulting common constant in
Lemma~\ref{lem:right-green}, and let $C_N$ be a common residual constant in
Theorem~\ref{thm:nonlinear-boundary}.  Assume that common branch and data radii
$R$ and $\delta_*$ have been chosen so that
\begin{equation}\label{eq:terminal-absorption}
 \theta:=8C_{\to}\|L_1^{-1}\|C_N(1+e^{-\alpha_{\rm ter}})R<1 .
\end{equation}
Assume also that the initial-state neighborhood and $\mathscr R$ satisfy
$d_\rho(x)=\|(x-\xinf,D\rho(\xinf)-\pinf)\|<\delta_*$ and the resulting
terminal states lie in $\mathcal X$.  Then
Theorem~\ref{thm:nonlinear-boundary}, applied to each endpoint map, gives local
branches on the same neighborhood.  For the same admissible initial state,
every $\rho,\widetilde\rho\in
\mathscr R$, $T\ge T_0$, and $0\le t<T$,
\begin{align}
 &\|z_t^\rho-z_t^{\widetilde\rho}\|
 +\|u_t^\rho-u_t^{\widetilde\rho}\|\nonumber\\
 &\qquad\le C_{\rm ter}e^{-\alpha_{\rm ter}(T-t)}
 \|D(\rho-\widetilde\rho)\|_{L^\infty(\mathcal X)} .
 \label{eq:terminal-trajectory-attenuation}
\end{align}
In particular,
\begin{equation}\label{eq:terminal-policy-attenuation}
 \boxed{\ \|\Kfb_T^\rho(x)-\Kfb_T^{\widetilde\rho}(x)\|
 \le C_{\rm ter}e^{-\alpha_{\rm ter}T}
 \|\rho-\widetilde\rho\|_{C^2(\mathcal X)}\ } .
\end{equation}
The gradients with respect to the initial state in
\eqref{eq:critical-value-gradient} obey the same
$e^{-\alpha_{\rm ter}T}$ bound.  The constants are horizon independent
for every fixed $0<\alpha_{\rm ter}<\gamma$.
\end{theorem}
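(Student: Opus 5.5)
The plan is to subtract the two branches for the same initial state and write the difference $h_t=z_t^\rho-z_t^{\widetilde\rho}$ as the solution of a linear boundary-value problem with graph rows. The forcing will be small in the weight $v_T$ and the terminal datum will be of size $\|D(\rho-\widetilde\rho)\|_{L^\infty}$. Lemma~\ref{lem:right-green} will then give the $v_T(t)=e^{-\alpha_{\rm ter}(T-t)}$ bound directly.

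First, the difference satisfies $P_xh_0=0$, since both branches start at $x_{\rm in}$. At the terminal time I write
\[
\pi_T^\rho-\pi_T^{\widetilde\rho}=D\rho(x_T^\rho)-D\widetilde\rho(x_T^{\widetilde\rho}).
\]
By the mean value theorem on the convex set $\mathcal X$, this equals $\bar S\,\xi$-difference plus $D(\rho-\widetilde\rho)(x_T^{\widetilde\rho})$, where $\bar S=\int_0^1D^2\rho(x_T^{\widetilde\rho}+\theta(x_T^\rho-x_T^{\widetilde\rho}))\,d\theta$. By \eqref{eq:terminal-reward-class}, $\|\bar S-S_\star\|\le\eta$. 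So the terminal row reads $R_{\bar S}h_T=b_T$ with $\|b_T\|\le\|D(\rho-\widetilde\rho)\|_{L^\infty(\mathcal X)}$.

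Next I must show that $\bar S$ lies in a set $\mathcal S$ admitted by Lemma~\ref{lem:right-green}. The difference $\widehat\Gamma_T(\bar S)-\widehat\Gamma_T(S_\star)$ has zero first block row and second block row $-(\bar S-S_\star)[X_sD_s^T\;X_u]$. Its norm is therefore at most $C_X\eta$, and Neumann's lemma with $G_\Gamma C_X\eta<1$ gives a uniform inverse bound $G_\Gamma/(1-G_\Gamma C_X\eta)$. Invertibility of $X_s$ is a property of $M$ alone. Hence $C_{\to}$ is common to all such $\bar S$ and all $T$.

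For the interior, I write both interior equations in explicit form $z_{t+1}=Mz_t+L_1^{-1}N$-terms; more precisely $L_0z_t+L_1z_{t+1}=-N(z_t,z_{t+1})$, so $h_{t+1}=Mh_t+g_t$ with $g_t=-L_1^{-1}[N(z^\rho_t,z^\rho_{t+1})-N(z^{\widetilde\rho}_t,z^{\widetilde\rho}_{t+1})]$. The residual Lipschitz bound gives
\[
\|g_t\|\le\|L_1^{-1}\|C_N\cdot 4R\,(\|h_t\|+\|h_{t+1}\|),
\]
using that both branches lie pointwise within $2R$ of the reference, since $w_T\le2$ and their norms are at most $R$. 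Dividing by $v_T(t+1)$ and using $v_T(t)/v_T(t+1)=e^{-\alpha_{\rm ter}}$ gives
\[
\max_s\|g_s\|/v_T(s+1)\le 4\|L_1^{-1}\|C_NR(1+e^{-\alpha_{\rm ter}})H,
\]
where $H=\max_t\|h_t\|/v_T(t)$. Lemma~\ref{lem:right-green} then yields $H\le C_{\to}\|b_T\|+\tfrac\theta2H$, with room to spare given the factor $8$ in \eqref{eq:terminal-absorption}. So $H\le 2C_{\to}\|b_T\|$, which is the $z$-part of \eqref{eq:terminal-trajectory-attenuation}.

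For the controls, $u_t=\nu(\xinf+\xi_t,\pinf+\pi_{t+1})$, and the bound on $D\nu$ gives
\[
\|u_t^\rho-u_t^{\widetilde\rho}\|\le M_{\rm int}(\|h_t\|+\|h_{t+1}\|)\le M_{\rm int}(1+e^{\alpha_{\rm ter}})v_T(t)H.
\]
Setting $t=0$ gives \eqref{eq:terminal-policy-attenuation}, since $\|D(\rho-\widetilde\rho)\|_{L^\infty}\le\|\rho-\widetilde\rho\|_{C^2}$. The gradient claim follows from Proposition~\ref{prop:critical-value}: $D_x\mathscr S_T=p_0$, so the difference is $\pi_0^\rho-\pi_0^{\widetilde\rho}$, the costate component of $h_0$, bounded by $v_T(0)H$.

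The main obstacle I expect is the bookkeeping that makes the absorption legitimate. Lemma~\ref{lem:right-green} is stated for a fixed $S$, but here $\bar S$ depends on the solutions themselves. This is harmless, because the estimate is a priori and uniform over $\mathcal S$: I apply it to the already-existing $h$ with its own $\bar S$. The difficulty lies in checking that the pointwise bound $\|z_t\|\le 2R$ and the convexity of $\mathcal X$ are exactly what the hypotheses provide, so that $\theta<1$ suffices.
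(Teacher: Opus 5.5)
Your proposal follows the paper's proof essentially step for step: the same Neumann bound $G_\Gamma/(1-G_\Gamma C_X\eta)$ for the lower-row perturbation, the same averaged Hessian giving $P_xh_0=0$ and $R_{\bar S}h_T=D(\rho-\widetilde\rho)(x_T^{\widetilde\rho})$, the same forcing $g_t=-L_1^{-1}(N-N)$, and absorption through Lemma~\ref{lem:right-green}. There is one arithmetic slip that does not affect the conclusion. Each of the four arguments of $N$ has norm at most $2R$, so the Lipschitz prefactor is $8R$, not $4R$. This gives exactly $H\le C_{\to}\|b_T\|+\theta H$, which is why the factor $8$ appears in \eqref{eq:terminal-absorption}, with no room to spare. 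The resulting constant is therefore $C_{\to}/(1-\theta)$ rather than $2C_{\to}$.
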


\begin{proof}
The perturbation of \eqref{eq:right-scaled-gamma} caused by $S-S_\star$ is
confined to its lower row and has norm at most $C_X\eta$.  Neumann's lemma
therefore gives the uniform inverse bound
$G_\Gamma/(1-G_\Gamma C_X\eta)$ and hence one common $C_{\to}$.

Set $h=z^\rho-z^{\widetilde\rho}$ and
\[
 \Sigma_T=\int_0^1D^2\rho(x_T^{\widetilde\rho}
 +s(x_T^\rho-x_T^{\widetilde\rho}))\,ds .
\]
Subtracting the boundary conditions gives
\begin{equation}\label{eq:terminal-difference-row}
 P_xh_0=0,\qquad R_{\Sigma_T}h_T
 =D(\rho-\widetilde\rho)(x_T^{\widetilde\rho})=:b_T.
\end{equation}
Writing $\widetilde{\mathcal F}=L+N$, the interior difference becomes
$h_{t+1}=Mh_t+g_t$, where
\[
 g_t=-L_1^{-1}\{N(z_t^\rho,z_{t+1}^\rho)
 -N(z_t^{\widetilde\rho},z_{t+1}^{\widetilde\rho})\}.
\]
If $H=\max_t\|h_t\|/v_T(t)$, the residual Lipschitz bound, $w_T\le2$, and
$v_T(t)=e^{-\alpha_{\rm ter}}v_T(t+1)$ imply
\[
 \max_t\frac{\|g_t\|}{v_T(t+1)}
 \le8\|L_1^{-1}\|C_N(1+e^{-\alpha_{\rm ter}})RH.
\]
Lemma~\ref{lem:right-green} and \eqref{eq:terminal-absorption} now give
$H\le C_{\to}\|D(\rho-\widetilde\rho)\|_\infty+\theta H$.
Thus $C_z=C_{\to}/(1-\theta)$ controls the state--costate difference, and
$C_{\rm ter}=C_z[1+L_\nu(1+e^{\alpha_{\rm ter}})]$ is admissible, where
$L_\nu=\sup\|D\nu\|$.  Indeed, the control estimate follows from
$u_t=\nu(x_t,p_{t+1})$; at $t=0$, the common initial state and
$v_T(1)=e^{\alpha_{\rm ter}}e^{-\alpha_{\rm ter}T}$ give
\eqref{eq:terminal-policy-attenuation}.  The gradient estimate follows from
\eqref{eq:critical-value-gradient}.
\end{proof}

\section{Verification for definite linear-quadratic systems}\label{sec:lq}
After absorbing the positive multiplier $\lambda$ into $Q$ and $R$, consider
the LQ dynamics $x_{t+1}=Ax_t+Bu_t$ with running term
$-\frac12x_t^\top Qx_t-\frac12u_t^\top Ru_t$, $Q\succeq0$, $R\succ0$.
The Hamiltonian stationarity equation is $u_t=R^{-1}B^\top p_{t+1}$ and the
state--costate equations are
\begin{equation}\label{eq:lq-system-ric}
\begin{aligned}
      x_{t+1}&=Ax_t+BR^{-1}B^\top p_{t+1},\\
      p_t&=A^\top p_{t+1}-Qx_t.
\end{aligned}
\end{equation}
Assume $A$ is invertible, put $G=BR^{-1}B^\top$, and eliminate
$p_{t+1}$ to obtain
{\setlength{\arraycolsep}{3pt}
\begin{equation}\label{eq:lq-M}
      M(A,B,Q,R)=
      \begin{pmatrix}
        A+GA^{-\top}Q & GA^{-\top}\\
        A^{-\top}Q & A^{-\top}
      \end{pmatrix}.
\end{equation}
}
Then $\binom{x_{t+1}}{p_{t+1}}=M(A,B,Q,R)\binom{x_t}{p_t}$, and let $\Jc=\bigl(\begin{smallmatrix}0&I\\-I&0\end{smallmatrix}\bigr)$.
The factorization
\[
 M=
 \begin{pmatrix}I&G\\0&I\end{pmatrix}
 \begin{pmatrix}A&0\\0&A^{-\top}\end{pmatrix}
 \begin{pmatrix}I&0\\Q&I\end{pmatrix}
\]
consists of symplectic factors, hence $M^\top\Jc M=\Jc$ and the spectrum is
reciprocal \citep{bittanti1991riccati}.

\begin{proposition}[Symplectic graph transversality]
\label{prop:symplectic-graph}
Let $M\in\mathrm{Sp}(2d,\R)$ be hyperbolic.  Write bases of its stable and
unstable subspaces as
\[
 \begin{aligned}
 V_s&=\binom{X_s}{P_s},\qquad V_u=\binom{X_u}{P_u},\\
 MV_s&=V_sD_s,\qquad MV_u=V_uD_u.
 \end{aligned}
\]
For $S=S^\top$, consider the endpoint rows
\begin{equation}\label{eq:graph-endpoint-rows}
 \Bzero=\begin{pmatrix}I&0\\0&0\end{pmatrix},\qquad
 \BT=\begin{pmatrix}0&0\\-S&I\end{pmatrix},
\end{equation}
which impose $x_0=\xi$ and $p_T-Sx_T=\pi$.  The subspaces $E_s,E_u$ are
Lagrangian, and
\begin{equation}\label{eq:lq-limit-matrix}
 \widehat\Gamma_\infty=[\Bzero V_s\quad\BT V_u]
 =\begin{pmatrix}X_s&0\\0&P_u-SX_u\end{pmatrix}.
\end{equation}
The two blocks are nonsingular exactly when
\begin{equation}\label{eq:graph-intersections}
 \begin{aligned}
 E_s\cap(\{0\}\times\R^d)&=\{0\},\\
 E_u\cap\{(x,Sx):x\in\R^d\}&=\{0\}.
 \end{aligned}
\end{equation}
Under these graph conditions, the uniform Green property holds for all
sufficiently large horizons.  For a prescribed $T_0$, it holds for every
$T\ge T_0$ if the finitely many scaled matrices $\widehat\Gamma_T$ below a
large-horizon cutoff are also nonsingular.  The conclusions are open under
small perturbations of $(M,S)$ within the hyperbolic symplectic region.  The
large-horizon conclusion is uniform on compact subsets satisfying the two
graph conditions.  Uniformity from a prescribed $T_0$ additionally requires
the finitely many short-horizon matrices to remain nonsingular on the compact
set.
\end{proposition}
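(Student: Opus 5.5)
The argument splits into three steps: the Lagrangian property of $E_s$ and $E_u$, the block formula with its kernel characterization, and then the Green conclusions drawn from Theorem~\ref{prop:scaled-green} together with the openness remark after it.

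\emph{Lagrangian property.} Symplecticity gives $M^\top\Jc M=\Jc$, so the form $\omega(v,w)=v^\top\Jc w$ is $M$-invariant. Take $v,w\in E_s$. Then
\[
\omega(v,w)=\omega(M^kv,M^kw)\to0 \quad\text{as } k\to\infty,
\]
because $\|M^kv\|,\|M^kw\|\le Ke^{-\gamma k}\cdot\mathrm{const}$. Hence $\omega$ vanishes on $E_s$, and the same argument with $M^{-k}$ applies to $E_u$. Reciprocity of the spectrum together with hyperbolicity gives $\dim E_s=\dim E_u=d$, so both subspaces are Lagrangian and $X_s,P_s,X_u,P_u$ are all $d\times d$.

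\emph{Block formula and kernels.} A direct multiplication gives $\Bzero V_s=\binom{X_s}{0}$ and $\BT V_u=\binom{0}{P_u-SX_u}$, which is \eqref{eq:lq-limit-matrix}.
\begin{itemize}
\item $X_sa=0$ holds exactly when $V_sa=(0,P_sa)\in E_s\cap(\{0\}\times\R^d)$. Since $V_s$ has full column rank, $X_s$ is nonsingular if and only if the first intersection in \eqref{eq:graph-intersections} is trivial.
\item $(P_u-SX_u)\eta=0$ holds exactly when $V_u\eta=(x,Sx)$ with $x=X_u\eta$. This is a point of $E_u\cap\operatorname{graph}S$, and it is nonzero whenever $\eta\ne0$. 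So $P_u-SX_u$ is nonsingular if and only if the second intersection is trivial.
\end{itemize}
The symmetry of $S$ is not needed for this equivalence. It only makes the graph Lagrangian, which is useful for interpretation.

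\emph{Green property and uniformity.} Once $\widehat\Gamma_\infty$ is invertible, the final clause of Theorem~\ref{prop:scaled-green} applies. It gives $\|\widehat\Gamma_N-\widehat\Gamma_\infty\|\le Ce^{-\gamma N}$, so Neumann's lemma yields invertibility and a uniform inverse bound for all $N\ge N_*$, and hence the uniform Green property for large horizons. For a prescribed $T_0$, the finitely many matrices $\widehat\Gamma_N$ with $T_0\le N<N_*$ are nonsingular by hypothesis. Their inverses are bounded individually, so one supremum bounds them all.

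Openness and compact uniformity are where I expect the main technical care. The plan is to use the openness remark following Theorem~\ref{prop:scaled-green}:
\begin{itemize}
\item Riesz projectors onto the parts of the spectrum inside and outside the unit circle vary continuously in $M$, so local bases $V_s(M),V_u(M)$ can be chosen continuously.
\item The dichotomy constants $K,\gamma$ can be taken locally uniform after slightly decreasing $\gamma$.
\item $\widehat\Gamma_\infty(M,S)$ depends continuously on $(M,S)$, and its invertibility is an open condition. The Neumann cutoff $N_*$ depends only on upper bounds for $K$, $\|\widehat\Gamma_\infty^{-1}\|$ and the boundary rows, so it is locally uniform.
\end{itemize}
On a compact set where both graph conditions hold, I would cover the set by finitely many such neighborhoods and take the largest cutoff and the smallest rate. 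The short-horizon matrices $\widehat\Gamma_N$ for $T_0\le N<N_*$ are finitely many continuous functions. If they are nonsingular on the compact set, their least singular values attain a positive minimum, which gives the uniform bound from the prescribed $T_0$.

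The delicate point is the continuous choice of bases and dichotomy constants near a given hyperbolic symplectic $M$. I would handle it by fixing reference bases $V_s^0,V_u^0$ and setting $V_s(M)=\Pi_s(M)V_s^0$ and $V_u(M)=\Pi_u(M)V_u^0$. These have full rank near $M^0$, and the exponential bounds then follow from $\|M^k\Pi_s(M)\|\le K'e^{-\gamma' k}$ uniformly near $M^0$, by a standard resolvent-integral estimate.
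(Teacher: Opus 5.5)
Your proposal is correct and follows essentially the same route as the paper. Both arguments get the Lagrangian property from $M$-invariance of the symplectic form together with reciprocal spectral symmetry. Both compute $\widehat\Gamma_\infty$ directly and read off the kernel characterization. Both then apply the Neumann argument of Theorem~\ref{prop:scaled-green} and treat the finitely many short horizons separately. Your explicit construction $V_s(M)=\Pi_s(M)V_s^0$, $V_u(M)=\Pi_u(M)V_u^0$, with locally uniform dichotomy constants and a finite covering, spells out what the paper compresses into a one-line appeal to continuity of the hyperbolic spectral projectors.
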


\begin{proof}
For $v,w\in E_s$, symplecticity gives
\[
 v^\top\Jc w=(M^kv)^\top\Jc(M^kw)\longrightarrow0
 \quad (k\to\infty).
\]
Thus $E_s$ is isotropic.  Reciprocal spectral symmetry gives
$\dim E_s=\dim E_u=d$, so $E_s$ is Lagrangian; applying the same argument to
$M^{-k}$ on $E_u$ proves the unstable assertion.  Direct multiplication by
\eqref{eq:graph-endpoint-rows} gives \eqref{eq:lq-limit-matrix}, and the two
kernel conditions are precisely \eqref{eq:graph-intersections}.

Moreover,
$\|\widehat\Gamma_T-\widehat\Gamma_\infty\|\le Ce^{-\gamma T}$.
If the diagonal blocks in \eqref{eq:lq-limit-matrix} are nonsingular,
Neumann's lemma gives a uniform inverse for all sufficiently large $T$;
Theorem~\ref{prop:scaled-green} then gives the Green property.  The finitely
many shorter horizons are covered by their assumed nonsingularity.
Continuity of hyperbolic spectral projectors and the two graph blocks proves
openness and uniformity on compact subsets for the large-horizon conclusion.  For fixed
$T_0$, continuity of the finitely many remaining singular values gives the
corresponding assertion for every $T\ge T_0$.
\end{proof}

Recall that $(A,B)$ is stabilizable if
$\operatorname{rank}[A-\lambda I\ \ B]=d$ for every $|\lambda|\ge1$.
Along every complex solution of \eqref{eq:lq-system-ric}, direct substitution
gives the energy identity
\begin{equation}\label{eq:energy}
\begin{aligned}
      p_{t+1}^*x_{t+1}-p_t^*x_t
      &=p_{t+1}^*Gp_{t+1}+x_t^*Qx_t,\\
      G&=BR^{-1}B^\top .
\end{aligned}
\end{equation}

\begin{theorem}[Stabilizable definite LQ data]\label{thm:lq}
Let $A$ be invertible, $Q=Q^\top\succ0$, $R=R^\top\succ0$, and let
$(A,B)$ be stabilizable.  Then $M(A,B,Q,R)$ is hyperbolic, $X_s$ is
invertible, and for every $S=S^\top\preceq0$ the block $P_u-SX_u$ is
invertible.  The rows
\[
       x_0=\xi,\qquad p_T-Sx_T=\pi
\]
satisfy the uniform Green property for every $T\ge1$.  The constants can be
chosen uniformly on compact subsets of this data class.
\end{theorem}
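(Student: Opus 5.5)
Hyperbolicity comes first, via the energy identity \eqref{eq:energy}. Suppose $M$ has an eigenvalue $\mu$ with $|\mu|=1$ and complex eigenvector $(x,p)$. Along the solution $(x_t,p_t)=\mu^t(x,p)$, the left side of \eqref{eq:energy} vanishes because $|\mu|^2=1$. Hence
\[
p_{t+1}^*Gp_{t+1}+x_t^*Qx_t=0 .
\]
Since $Q\succ0$ this forces $x=0$. Then $x_{t+1}=Gp_{t+1}$ gives $Gp=0$, i.e.\ $B^\top p=0$, and $p_t=A^\top p_{t+1}$ gives $A^\top p=\bar\mu p$, up to the appropriate conjugation. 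Combining $B^\top p=0$ with $A^\top p=\bar\mu p$ and $|\mu|=1$ contradicts stabilizability through the PBH rank test. So $M$ is hyperbolic, and by symplecticity (Proposition~\ref{prop:symplectic-graph}) $\dim E_s=\dim E_u=d$, with both subspaces Lagrangian.

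Next come the graph conditions \eqref{eq:graph-intersections}, again by energy arguments. For $X_s$, suppose $v=(0,p_0)\in E_s$ and follow the stable solution. Summing \eqref{eq:energy} from $0$ to $\infty$ uses $p_t^*x_t\to0$ and $p_0^*x_0=0$, so
\[
\sum_t\bigl(p_{t+1}^*Gp_{t+1}+x_t^*Qx_t\bigr)=0 .
\]
Thus all $x_t=0$, and the same elimination gives $B^\top p_t=0$ and $p_t=A^\top p_{t+1}$. The stable solution then reads $p_{t+1}=A^{-\top}p_t$ with $p_t\to0$, which makes $p_0$ a combination of eigenvectors of $A^{-\top}$ with modulus $<1$. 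Equivalently, $A^\top$ has eigenvalues with modulus $>1$ on an invariant subspace contained in $\ker B^\top$. That is again a PBH contradiction, so $p_0=0$. The unstable subspace is handled by summing backwards over $t\le0$ along the unstable solution, where $p_t^*x_t\to0$ as $t\to-\infty$. If $(x_0,Sx_0)\in E_u$, the sum of the nonnegative terms equals $p_0^*x_0=x_0^*Sx_0\le0$, so every term vanishes. This forces $x_t=0$ for $t<0$, and the recursion then propagates $x_0=0$ and $p$ zero as well; if needed, the PBH argument is used in the same way as before.

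The main obstacle is covering \emph{every} $T\ge1$, which requires invertibility of the finite-horizon $\widehat\Gamma_T$. I would show directly that the homogeneous problem with $x_0=0$ and $p_T=Sx_T$ has only the trivial solution. Summing \eqref{eq:energy} from $0$ to $T-1$ gives
\[
x_T^*Sx_T=\sum_{t=0}^{T-1}\bigl(p_{t+1}^*Gp_{t+1}+x_t^*Qx_t\bigr)\ge0,
\]
while $S\preceq0$ makes the left side $\le0$. So both sides vanish, all $x_t=0$ for $t<T$, and $Gp_{t}=0$. Then $x_T=Ax_{T-1}+Gp_T=0$, hence $p_T=0$, and the backward recursion $p_t=A^\top p_{t+1}-Qx_t$ gives $p\equiv0$. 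Because the homogeneous solutions correspond bijectively to the kernel of $\widehat\Gamma_T$ (proof of Theorem~\ref{prop:scaled-green}), each $\widehat\Gamma_T$ is nonsingular. Together with the large-horizon Neumann argument of Proposition~\ref{prop:symplectic-graph}, Theorem~\ref{prop:scaled-green} then yields the Green property for all $T\ge1$.

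Uniformity on compact subsets follows from three facts. The argument above shows that the hypotheses hold at every point of the class. Hyperbolic projectors, the dichotomy constants, and $\widehat\Gamma_\infty^{-1}$ depend continuously on the data, so the Neumann cutoff $N_*$ can be chosen uniformly. Finally, the finitely many remaining least singular values are continuous and positive, so they attain positive minima on a compact set.
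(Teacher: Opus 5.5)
Your proposal is correct and follows the paper's proof essentially step for step. You use the energy identity with the PBH test for hyperbolicity and for invertibility of $X_s$, backward summation for the $P_u-SX_u$ block, the finite-horizon uniqueness argument for every $\widehat\Gamma_T$, and continuity plus Neumann for uniformity on compact subsets.

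Two places could be tightened. In the $X_s$ step, name the invariant subspace explicitly as $\operatorname{span}\{p_t:t\ge1\}\subset\ker B^\top$; it is $A^{-\top}$-invariant, hence $A^\top$-invariant, and $A^{-\top}$ is contracting on it. In the unstable step no PBH fallback is needed, since $x_0=Ax_{-1}+Gp_0=0$ and then $p_0=Sx_0=0$.
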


\begin{proof}
If $Mz=\lambda z$ with $|\lambda|=1$, the left side of
\eqref{eq:energy} vanishes along $\lambda^tz$.  Hence $x=0$ and
$B^\top p=0$.  The second block equation gives
$A^\top p=\lambda^{-1}p$, contradicting the PBH condition unless $p=0$.
Thus $M$ is hyperbolic.

If $(0,p)\in E_s$, sum \eqref{eq:energy} forward to infinity.  Exponential
decay makes both endpoint pairings vanish, hence $x_t=0$ and
$B^\top p_t=0$.  Here $p_{t+1}=A^{-\top}p_t$.  If $p\ne0$, then
$V=\operatorname{span}\{p_t:t\ge1\}$ is nonzero and invariant under
$A^{-\top}$.  Since $A^{-\top}$ is injective, $A^{-\top}V=V$, so $V$ is also
$A^\top$-invariant.  The powers of $A^{-\top}|_V$ converge to zero, hence
$A^\top|_V$ has an eigenvalue of modulus greater than one.  Its eigenvector is
annihilated by $B^\top$, contradicting the PBH condition.  Therefore $X_s$
is invertible.

Let $(x_0,Sx_0)\in E_u$ with $S\preceq0$.  Summing the energy identity from
$-\infty$ to $-1$ gives a nonnegative sum equal to
$x_0^\top Sx_0\le0$.  Thus $x_t=0$ for $t\le-1$ and $Gp_0=0$; the state
equation gives $x_0=0$ and then $p_0=0$.  Hence
$P_u-SX_u$ is invertible.  By \eqref{eq:lq-limit-matrix} and
Theorem~\ref{prop:scaled-green}, the Green property holds for all sufficiently
large horizons.

For a finite homogeneous problem, $x_0=0$ and $p_T=Sx_T$.  Summing
\eqref{eq:energy} gives
\[
 \sum_{t=0}^{T-1}
 \bigl(p_{t+1}^*Gp_{t+1}+x_t^*Qx_t\bigr)
 =x_T^*Sx_T\le0.
\]
The sum is nonnegative, so both sides vanish.  Consequently $x_t=0$,
$Gp_T=0$, then $x_T=0$, $p_T=0$, and backward
recursion gives $p_t=0$.  Thus every finite-horizon boundary matrix is
invertible.  The finitely many matrices below the large-horizon cutoff are
absorbed into the uniform bound.  Continuous dependence of invariant
subspaces and finite-horizon matrices gives compact-subset uniformity.
\end{proof}

\begin{corollary}[Riccati representation and sensitivity to the terminal Hessian]
\label{cor:riccati-rate}
Under Theorem~\ref{thm:lq}, let the terminal reward be
$r_S(x)=\tfrac12x^\top Sx$ with $S\preceq0$.  For the unconstrained problem
(or where its optimizer is interior), strict concavity makes the stationary
branch the unique optimal solution of the Bellman problem, with
\[
 V_T^S(x)=\tfrac12x^\top\mathscr P_T(S)x,\qquad
 \Kfb_T^S(x)=K_T(S)x.
\]
Fix $K\ge1$ and $\gamma>0$ such that, for every $k\ge0$,
$\|V_sD_s^k\|\le Ke^{-\gamma k}$ and
$\|V_uD_u^{-k}\|\le Ke^{-\gamma k}$.
Put
\begin{equation}\label{eq:lq-mobius-data}
 \begin{aligned}
 C_s&=P_s-SX_s,\qquad C_u=P_u-SX_u,\\
 E_T^{\rm Ric}&=D_u^{-T}C_u^{-1}C_sD_s^T .
 \end{aligned}
\end{equation}
Then
\begin{align}
 \mathscr P_T(S)&=(P_s-P_uE_T^{\rm Ric})(X_s-X_uE_T^{\rm Ric})^{-1},
 \label{eq:lq-mobius}\\
 K_T(S)&=R^{-1}B^\top A^{-\top}(\mathscr P_T(S)+Q).
 \label{eq:lq-first-gain}
\end{align}
For $\mathscr P_\infty=P_sX_s^{-1}$ and
$K_\infty=R^{-1}B^\top A^{-\top}(\mathscr P_\infty+Q)$,
\begin{equation}\label{eq:lq-doubled-rate}
 \|\mathscr P_T(S)-\mathscr P_\infty\|+
 \|K_T(S)-K_\infty\|\le Ce^{-2\gamma T}.
\end{equation}
Uniformly for $S,\widetilde S$ in compact subsets of $S\preceq0$,
\begin{align}
 &\|\mathscr P_T(S)-\mathscr P_T(\widetilde S)\|
 +\|K_T(S)-K_T(\widetilde S)\|\nonumber\\
 &\qquad\le Ce^{-2\gamma T}\|S-\widetilde S\|.
 \label{eq:lq-terminal-hessian-forgetting}
\end{align}
Thus, when two terminal rewards differ only through their Hessians, the
induced Riccati matrices and initial gains differ by $O(e^{-2\gamma T})$.
Theorem~\ref{thm:terminal-reward} gives the one-sided rate for general
terminal-gradient perturbations.  The Riccati convergence itself is classical
\citep{caines1970discrete}; here it is derived explicitly from the same scaled
graph certificate.
\end{corollary}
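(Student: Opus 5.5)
The plan is to work directly with the finite-horizon LQ boundary-value problem and express $p_0$ linearly in terms of $x_0$, using the dichotomy representation from Theorem~\ref{prop:scaled-green}.

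\emph{Step 1: optimality.} For $S\preceq0$, $Q\succ0$, $R\succ0$, the objective is strictly concave in $(u_0,\dots,u_{T-1})$. Hence the Pontryagin stationary branch is the unique maximizer. The relations \eqref{eq:bellman-pmp} then give $p_0=D_xV_T^S(x_0)$.

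\emph{Step 2: representation and Möbius formula.} By Theorem~\ref{thm:lq}, the boundary problem with $x_0=\xi$, $p_T-Sx_T=0$ is uniquely solvable, and the solution is linear in $\xi$, so $V_T^S$ is quadratic with Hessian $\mathscr P_T(S)$ defined by $p_0=\mathscr P_T(S)x_0$. Every solution has the form
\[
 \binom{x_t}{p_t}=V_sD_s^ta+V_uD_u^{t-T}\eta .
\]
The terminal row gives $C_sD_s^Ta+C_u\eta=0$, and $C_u$ is invertible by Theorem~\ref{thm:lq}. Hence $\eta=-C_u^{-1}C_sD_s^Ta$ and $D_u^{-T}\eta=-E_T^{\rm Ric}a$. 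Evaluating at $t=0$ gives
\[
 x_0=(X_s-X_uE_T^{\rm Ric})a,\qquad p_0=(P_s-P_uE_T^{\rm Ric})a .
\]
Since $X_s$ is invertible and $\|E_T^{\rm Ric}\|\le Ce^{-2\gamma T}$, the matrix $X_s-X_uE_T^{\rm Ric}$ is invertible for large $T$. For small $T$, invertibility follows from unique solvability: a nonzero $a$ with $x_0=0$ would give a nontrivial homogeneous solution. This yields \eqref{eq:lq-mobius}.

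The bound on $E_T^{\rm Ric}$ comes from splitting off the dichotomy bounds: $D_s^T=W_s(V_sD_s^T)$ is $O(e^{-\gamma T})$, and likewise $D_u^{-T}=W_uV_uD_u^{-T}$ is $O(e^{-\gamma T})$. Together with the uniform bound on $C_u^{-1}$ on compact sets of $S$, this gives the $e^{-2\gamma T}$ factor.

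\emph{Step 3: the gain.} The gain formula follows from $u_0=R^{-1}B^\top p_1$ together with $p_0=A^\top p_1-Qx_0$. These give $p_1=A^{-\top}(p_0+Qx_0)=A^{-\top}(\mathscr P_T+Q)x_0$, which is \eqref{eq:lq-first-gain}.

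\emph{Step 4: rates.} Write $\mathscr P_T-\mathscr P_\infty$ as a smooth Möbius function of $E=E_T^{\rm Ric}$ near $E=0$; at $E=0$ it equals $P_sX_s^{-1}$. Because the map is Lipschitz in $E$ on a neighborhood of $0$, we get \eqref{eq:lq-doubled-rate} for large $T$. Finitely many small $T$ are absorbed into the constant by enlarging $C$.

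For \eqref{eq:lq-terminal-hessian-forgetting}, observe that $S\mapsto C_u(S)^{-1}C_s(S)$ is Lipschitz on compact subsets of $\{S\preceq0\}$, because $C_u^{-1}$ is uniformly bounded there by continuity and compactness. Hence
\[
 \|E_T(S)-E_T(\widetilde S)\|\le\|D_u^{-T}\|\,\|D_s^T\|\,C\|S-\widetilde S\|\le Ce^{-2\gamma T}\|S-\widetilde S\| .
\]
Combining this with the Lipschitz property of the Möbius map in $E$ on a uniform neighborhood gives the estimate, and the gain estimate follows linearly.

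The main obstacle is uniformity for all $T\ge1$ rather than only large $T$. This needs a uniform bound on $(X_s-X_uE_T)^{-1}$ for small $T$ and compact $S$. I would handle it by continuity in $S$ together with finite-horizon invertibility, using that invertibility holds for every $T$ and every $S\preceq0$.
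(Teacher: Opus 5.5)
Your proposal is correct and follows essentially the same route as the paper: the dichotomy representation, elimination of $\eta$ through the terminal graph row, the M\"obius formula read off at $t=0$, the gain from $p_1=A^{-\top}(p_0+Qx_0)$, the bound $\|E_T^{\rm Ric}\|\le Ce^{-2\gamma T}$, and uniform invertibility of $X_s-X_uE_T^{\rm Ric}$ (Neumann argument for large $T$, boundary-value uniqueness for the finitely many short horizons). The only cosmetic difference is that the paper uses the explicit identity $\mathscr P_T-\mathscr P_\infty=(\mathscr P_\infty X_u-P_u)E_T^{\rm Ric}(X_s-X_uE_T^{\rm Ric})^{-1}$ where you invoke Lipschitz continuity of the M\"obius map in $E$; also note that it is $\mathscr P_T$, not $\mathscr P_T-\mathscr P_\infty$, that equals $P_sX_s^{-1}$ at $E=0$.
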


\begin{proof}
Strict concavity follows from $R\succ0$, $Q\succeq0$, and $S\preceq0$.
Write $z_0=V_sa+V_uD_u^{-T}\eta$ and
$z_T=V_sD_s^Ta+V_u\eta$.  The terminal graph gives
$C_sD_s^Ta+C_u\eta=0$, hence
$\eta=-C_u^{-1}C_sD_s^Ta$; substitution at $t=0$ proves
\eqref{eq:lq-mobius}.  The costate identity
$p_1=A^{-\top}(p_0+Qx_0)$ proves \eqref{eq:lq-first-gain}.
The dichotomy gives $\|E_T^{\rm Ric}\|\le Ce^{-2\gamma T}$.  Moreover,
$(X_s-X_uE_T^{\rm Ric})^{-1}$ is uniformly bounded: for large $T$ this
follows from a Neumann argument around $X_s$, and the finitely many remaining
horizons follow from boundary-value uniqueness (uniformly on the stated
compact sets).  Therefore
\[
 \mathscr P_T-\mathscr P_\infty
 =(\mathscr P_\infty X_u-P_u)E_T^{\rm Ric}
   (X_s-X_uE_T^{\rm Ric})^{-1},
\]
which proves \eqref{eq:lq-doubled-rate}.  The map
$S\mapsto C_u^{-1}C_s$ is uniformly Lipschitz on the stated compact sets,
giving \eqref{eq:lq-terminal-hessian-forgetting}.
\end{proof}

\begin{corollary}[Nonlinear terminal condition with a nonpositive reference Hessian]\label{cor:original-concave}
Assume the dynamics and running cost are the LQ data of
Theorem~\ref{thm:lq}, translated to the stationary reference, and choose a
compact $U$ with $0\in\operatorname{int}U$.  If the terminal reward satisfies
the hypotheses of Theorem~\ref{thm:main}, with
$\pinf=r_x(\xinf,y_\infty)$ and
\[
       r_{xx}(\xinf,y_\infty)\preceq0 ,
\]
then the original Pontryagin problem
\[
      x_0=x_{\rm in},\qquad p_T=r_x(x_T,y)
\]
has the branch, explicit radius, and sensitivity estimates of
Theorem~\ref{thm:main}, with $T_0=1$.
\end{corollary}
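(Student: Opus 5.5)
The plan is to reduce the corollary to Theorem~\ref{thm:main} by checking its Green hypothesis via Theorem~\ref{thm:lq}. First I identify the reduced linearization. Translate coordinates so the reference is the origin. The LQ data have $H_{uu}=-R$ (nonsingular), so \eqref{eq:regular-stationarity} holds. The elimination map is $\nu(x,p_+)=R^{-1}B^\top p_+$, which is linear, and $\uinf=0\in\operatorname{int}U$. Hence the reduced step \eqref{eq:reduced} is exactly the linear system \eqref{eq:lq-system-ric}, and $N\equiv0$. For \eqref{eq:explicit-transition}, $L_1h^+=(\xi^+-G\pi^+,\,-A^\top\pi^+)$, and $\det L_1=\pm\det A\ne0$. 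Solving gives $M=-L_1^{-1}L_0=M(A,B,Q,R)$ of \eqref{eq:lq-M}. All smoothness and derivative bounds of Section~\ref{sec:model} hold trivially for the interior, with $C_N=0$ allowed. The terminal bounds are assumed through the hypotheses of Theorem~\ref{thm:main}.

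Next I compute the linearized boundary rows. From the proof of Theorem~\ref{thm:main}, $D\B_{y_\infty}(0,0)[z_0,z_T]=(\xi_0,\ \pi_T-S\xi_T)$ with $S:=r_{xx}(\xinf,y_\infty)$. This is symmetric as a Hessian, and $S\preceq0$ by assumption. These are precisely the graph rows \eqref{eq:graph-endpoint-rows}. Theorem~\ref{thm:lq} applies because $A$ is invertible, $Q\succ0$, $R\succ0$, and $(A,B)$ is stabilizable. It yields the uniform Green property for every $T\ge1$, so the hypothesis of Theorem~\ref{thm:main} holds with $T_0=1$ and some constants $(C_G,\gamma)$.

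Then I invoke Theorem~\ref{thm:main} directly. I pick $Y_1$ satisfying \eqref{eq:y-row-margin} by continuity of $r_{xx}$ in $y$, and Lemma~\ref{lem:boundary-row-perturbation} keeps $T_0=1$. This gives the branch, the radius \eqref{eq:apriori-root-radius}, and the estimates \eqref{eq:main-size}--\eqref{eq:main-lip}, all uniform in $T\ge1$. The controls $u_t=R^{-1}B^\top p_{t+1}$ stay in a small ball around $0$. After shrinking $R_*$ if necessary, they stay inside $\operatorname{int}U$, so the variational inequality reduces to $H_u=0$ and the branch is a genuine solution of the original Pontryagin problem.

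The only delicate point I expect is bookkeeping. Theorem~\ref{thm:main} requires $T_0$ to match for both the unperturbed and perturbed rows. For $y\ne y_\infty$ the perturbed Hessian need not be $\preceq0$, but the perturbation lemma needs only smallness of $E_T(y)$, not sign-definiteness, so $T_0=1$ persists. Translating the stationary reference is harmless because it shifts only the affine parts. The costate reference $\pinf$ of the translated LQ problem must also match $r_x(\xinf,y_\infty)$, which is exactly the stated assumption.
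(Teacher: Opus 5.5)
Your proposal is correct and follows the paper's own proof. The paper applies Theorem~\ref{thm:lq} with $S=r_{xx}(\xinf,y_\infty)\preceq0$ to obtain the Green property from $T_0=1$, and then invokes Theorem~\ref{thm:main}. You do the same, and you also make explicit two routine checks the paper leaves implicit: that the reduced linearization is exactly $M(A,B,Q,R)$ with $\det L_1=\pm\det A\neq0$, and that the row perturbation in $y$ preserves $T_0=1$.
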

\begin{proof}
Apply Theorem~\ref{thm:lq} with
$S=r_{xx}(\xinf,y_\infty)$, then apply Theorem~\ref{thm:main}.
\end{proof}

\section{Numerical illustration}\label{sec:numerics}
The finite-dimensional computations below evaluate the stated certificates
and nonlinear checks.  All coordinates in the example are nondimensional.
We use the coupled stabilizable data
\begin{equation}\label{eq:num-data}
\begin{aligned}
   A=\begin{pmatrix}1.5&0.4\\0&0.6\end{pmatrix},\qquad
   B&=\begin{pmatrix}1\\0.5\end{pmatrix},\\
   Q=\begin{pmatrix}1&0.3\\0.3&0.6\end{pmatrix},\qquad
   R&=1,
\end{aligned}
\end{equation}
with rectangular input.  Here $\lambda_{\min}(Q)\approx0.439$ and
$\|[A,Q]\|_2\approx0.334$.  Since $1.5$ is the only eigenvalue of $A$ with
modulus at least one and
$\sigma_{\min}([A-1.5I\ \ B])\approx0.980$, the PBH test gives
stabilizability, so Theorem~\ref{thm:lq} applies.  The matrix \eqref{eq:lq-M}
has eigenvalue moduli $\{0.367,0.556,1.798,2.723\}$, spectral gap
$\gamma\approx0.587$, and, for unit-norm eigenvector bases,
$\sigma_{\min}(X_s)\approx0.188$ and $\sigma_{\min}(P_u)\approx0.325$.
Thus the canonical graph transversality condition holds with an explicit
numerical margin.  For shifted
rows we take
\[
   S_{\rm term}=-\begin{pmatrix}0.8&0.2\\0.2&0.5\end{pmatrix}\preceq0,
\]
so Theorem~\ref{thm:lq} applies.  The nonlinear terminal reward is
\[
\begin{aligned}
 r_{\rm nl}(x)&=\tfrac12x^\top S_{\rm term}x
                 -(v_{\rm term}^\top x)^3,\\
 v_{\rm term}&=2^{-1/2}(1,1)^\top .
\end{aligned}
\]
Direct evaluation gives
$\lambda_{\max}(r_{{\rm nl},xx})\le-0.106$ on
$\|x\|_\infty\le0.08$.  For the nonlinear test we perturb the dynamics to
$\Fbar(x,u)=Ax+Bu+0.1\varphi(x)$, where
$\varphi(x)=(x_2^2,x_1x_2)$, and solve in the unit direction proportional to
$(0,0,-0.95,-0.30)$.  Table~\ref{tab:certificate-inputs} reports the scaled Green
test, quadratic remainder slopes, boundary conditioning, and the explicit
neighborhood and posterior tests.  The normalized remainder is
\[
 \widehat C(T,s)=\sup_{0\le t\le T}
 \frac{\|z_t-z_t^{\lin}\|}{w_T(t)^2s^2},
 \qquad \alpha=0.05<\gamma/2 .
\]
Across $T=20,40,80,160$, the scaled boundary condition number is at most
$7.725$, whereas a reliable column-norm lower bound for the unscaled matrix
reaches $7.56\times10^{69}$ at $T=160$.  Fits over
$s\in\{3\times10^{-4},10^{-3},3\times10^{-3},10^{-2}\}$ give remainder
slopes between $1.99987$ and $2.00713$ with $R^2\ge0.999997$.
The a posteriori test additionally uses $s=3\times10^{-2}$ and $s=10^{-1}$.
To test the decay caused by a terminal reward perturbation, we set
$r_s(x)=r_{\rm nl}(x)+s d^\top x$, with
$d=(-0.95,-0.30)^\top/\|(-0.95,-0.30)\|$, and evaluate
$|u_0^s-u_0^0|/s$.  Fits over $T=20,25,30,35$ for
$s\in\{0.003,0.01,0.03\}$ give rates $0.58706$--$0.58731$ with
$R^2>0.999999$, consistent with $\gamma=0.58664$.  The exact LQ graph calculation
for changing the terminal Hessian from $0$ to $S_{\rm term}$ gives rate
$1.17187$ with $R^2>0.999999$, consistent with $2\gamma=1.17328$.

\begin{table*}[t]
\caption{Representative numerical quantities.  Panel (a) evaluates the
direction-independent a priori bound at zero.  Panel (b) evaluates the
a posteriori bound at the linear predictor for $\Delta=s\widehat d$,
$s=10^{-2}$, $\|\widehat d\|_2=1$, and
$\|\widehat d\|_\infty=0.95358$.  Certificate norms are infinity norms;
only $\widehat d$ is normalized in the Euclidean norm.  Entries are upper
estimates obtained from a Neumann-series bound.}
\label{tab:certificate-inputs}
\centering\scriptsize
\renewcommand{\arraystretch}{1.00}
\begin{tabular}{@{}llrrrrr@{}}
\toprule
\multicolumn{7}{c}{(a) Direction-independent a priori bound evaluation}\\
\cmidrule(lr){1-7}
$T$ & row & $\mu_T$ & $c_T$ & $K$ & $\rho_T^\circ$
& $\rho_T^{\rm eval}$\\
\midrule
20  & can.   & $9.6705$ & $3.2807$ & $0.6000$
    & $2.6267{\times}10^{-2}$ & $2.1013{\times}10^{-2}$\\
20  & shift. & $9.6705$ & $3.1803$ & $8.4853$
    & $1.9160{\times}10^{-3}$ & $1.5328{\times}10^{-3}$\\
160 & can.   & $9.6706$ & $3.2807$ & $0.6000$
    & $2.6266{\times}10^{-2}$ & $2.1013{\times}10^{-2}$\\
160 & shift. & $9.6706$ & $3.1803$ & $8.4853$
    & $1.9160{\times}10^{-3}$ & $1.5328{\times}10^{-3}$\\
\bottomrule
\end{tabular}

\smallskip
\begin{tabular}{@{}llrrrrr@{}}
\toprule
\multicolumn{7}{c}{(b) A posteriori bound evaluation at $s=10^{-2}$}\\
\cmidrule(lr){1-7}
$T$ & row & $\beta_T^{\rm post}$ & $\omega_T^{\rm post}$
& $h_T^{\rm post}=\beta_T^{\rm post}\omega_T^{\rm post}$
& $r_{-,T}$ & $R_T^{\rm rep}$\\
\midrule
20  & can.   & $6.5069{\times}10^{-6}$ & $5.8023$
    & $3.7755{\times}10^{-5}$ & $6.5070{\times}10^{-6}$
    & $8.6176{\times}10^{-2}$\\
20  & shift. & $9.7658{\times}10^{-5}$ & $82.0572$
    & $8.0136{\times}10^{-3}$ & $9.8053{\times}10^{-5}$
    & $6.1423{\times}10^{-3}$\\
160 & can.   & $6.5069{\times}10^{-6}$ & $5.8024$
    & $3.7755{\times}10^{-5}$ & $6.5070{\times}10^{-6}$
    & $8.6175{\times}10^{-2}$\\
160 & shift. & $9.7658{\times}10^{-5}$ & $82.0578$
    & $8.0136{\times}10^{-3}$ & $9.8053{\times}10^{-5}$
    & $6.1423{\times}10^{-3}$\\
\bottomrule
\end{tabular}

\smallskip
\begin{minipage}[t]{.48\textwidth}
\centering
(c1) Finite-horizon boundary diagnostics

\smallskip
\begin{tabular}{@{}rccc@{}}
\toprule
$T$ & $\|\widehat\Gamma_T-\widehat\Gamma_\infty\|_2$
& $\sigma_{\min}$ (can.) & $\max\kappa_2$\\
\midrule
20 & $5.4\times10^{-6}$ & $0.1879$ & $7.72$\\
40 & $4.3\times10^{-11}$ & $0.1879$ & $7.72$\\
80 & $2.8\times10^{-21}$ & $0.1879$ & $7.72$\\
160 & $1.2\times10^{-41}$ & $0.1879$ & $7.72$\\
\bottomrule
\end{tabular}
\end{minipage}
\hfill
\begin{minipage}[t]{.48\textwidth}
\centering
(c2) Nonlinear diagnostics

\smallskip
\begin{tabular}{@{}lcc@{}}
\toprule
diagnostic & canonical & shifted\\
\midrule
fitted remainder slope & $1.99987$ & $2.00713$\\
a posteriori test satisfied & $24/24$ & $20/24$\\
\bottomrule
\end{tabular}
\end{minipage}
\end{table*}

For the finite-dimensional residual
$F_T(z;\Delta)=L_Tz-E\Delta+N_T(z)$, where
$N_T(0)=DN_T(0)=0$, we also evaluate an explicit direction-independent
Newton--Kantorovich data radius.  All operator norms in the following
calculation are ordinary trajectory infinity norms.  With
$\mu_T=\|L_T^{-1}\|_\infty$, $c_T=\|L_T^{-1}E\|_\infty$, and
$\|DN_T(z)-DN_T(w)\|_\infty\le K\|z-w\|_\infty$, the condition
\begin{equation}\label{eq:numerical-data-radius}
 \|\Delta\|_\infty<\rho_T:=\frac{1}{2\mu_TKc_T}
\end{equation}
places the linear predictor in the Kantorovich regime.  Indeed, if
$d_T^{\lin}=c_T\|\Delta\|_\infty$ and
$a_T=\mu_TK d_T^{\lin}<1/2$, then a Neumann bound at the
linear predictor gives
\begin{align*}
 \beta_T&\le\frac{\mu_TK(d_T^{\lin})^2}{2(1-a_T)},\qquad
 \omega_T\le\frac{\mu_TK}{1-a_T},\\
 \beta_T\omega_T&\le
 \frac12\left(\frac{a_T}{1-a_T}\right)^2<\frac12 .
\end{align*}
Thus Proposition~\ref{prop:posterior}, in the ordinary infinity norm,
applies.  Using $80\%$ of this
open threshold gives uniform evaluated radii $2.10\times10^{-2}$ for the
canonical row and $1.53\times10^{-3}$ for the nonlinear shifted row over the
four tested horizons.  These radii are valid for every endpoint direction,
unlike the directional
sampling.  The a posteriori test of Proposition~\ref{prop:posterior} succeeds in
44 of 48 runs: all canonical runs and all shifted runs with $s\le0.03$ pass;
the four shifted $s=0.1$ runs fail the sufficient inequality and lie outside
the stated concavity box; they are included only to examine the sufficient
test outside the stated concavity range.  All shifted runs with
$s\le0.03$ remain inside that box.  Failure of the test does not imply
nonexistence.  At $s=10^{-2}$, the shifted posterior
reported admissible uniqueness radius is approximately $6.14\times10^{-3}$.
These are floating-point contraction checks using inverse-norm bounds with a
Neumann correction,
not interval-verified computer-assisted proofs.

Table~\ref{tab:certificate-inputs} reports the numerical inputs behind the
reported radii.  Here $K$ is a global infinity-norm Lipschitz bound for the
Jacobian of the polynomial residual, so Proposition~\ref{prop:posterior}
allows $R_{\rm tube}=+\infty$.  Also,
\[
 \rho_T^\circ=\rho_T\ \text{in \eqref{eq:numerical-data-radius}},\qquad
 \rho_T^{\rm eval}=0.8\rho_T^\circ .
\]
Panel (b) reports the directly evaluated posterior quantities
$\beta_T^{\rm post},\omega_T^{\rm post}$.  There $r_{-,T}$ bounds the
distance from the linear predictor to an exact zero, while
\begin{equation}\label{eq:reported-uniqueness-radius}
 R_T^{\rm rep}=\tfrac12(r_{-,T}+1/\omega_T^{\rm post})
\end{equation}
is one concrete radius strictly inside the admissible uniqueness interval,
not a maximal radius.  The inverse residual in the displayed rows is below
$2.1\times10^{-15}$.  The directional posterior test can succeed outside the
smaller direction-independent a priori ball because it is centered at the computed
predictor for that particular direction.

The additional sweep $S(\theta)=S_{\rm term}+\theta I$ detects the value at
which limiting transversality fails, near $\theta_*\approx1.23347$; the least singular values there
are $4.67\times10^{-16}$ for $\widehat\Gamma_\infty$ and
$3.86\times10^{-12}$ for $T=20$.  The terminal perturbation data, remainder slopes,
radii, and pass counts are summarized above.

\section{Concluding remarks}\label{sec:conclusion}
The scaled boundary matrix gives a Green estimate that separates both endpoint
effects.  Its one-sided terminal estimate shows that an error in the terminal
reward gradient changes the initial stationary control and the gradient with
respect to the initial state by $O(e^{-\alpha_{\rm ter}T})$, with an explicit
local constant.  The same analysis gives horizon-uniform branches, quadratic
remainders, and a priori and a posteriori certificates.  For LQ systems with
invertible $A$, stabilizable $(A,B)$, $Q\succ0$, $R\succ0$, and a nonpositive
terminal Hessian, the certificate holds from $T=1$, and sensitivity to that
Hessian decays at rate $O(e^{-2\gamma T})$.  Under strict concavity these
stationary solutions are Bellman optimal.  Establishing horizon-uniform
second-order sufficient conditions for the nonlinear problem remains open.

\section*{Data and code availability}
No external data were used; all numerical values follow from the equations and
parameters in this article.

\section*{Declaration of generative AI and AI-assisted technologies in the writing process}
The authors used OpenAI Codex in ChatGPT Work for language editing and to draft
parts of the numerical code.  They independently verified the analytic
Jacobian and numerical results and take full responsibility for this article.

\begingroup
\fontsize{7.8}{8.15}\selectfont
\setlength{\bibsep}{0pt}

\endgroup
\end{document}